\newif\ifslide
\theoremstyle{plain}
\newtheorem{theorem}{Theorem}
\newtheorem{theorem}{Theorem}[section]
\newtheorem{theorema}{Theorem}
\newtheorem{corollary}[theorem]{Corollary}
\newtheorem{lemma}[theorem]{Lemma}
\newtheorem{proposition}[theorem]{Proposition}
\newtheorem{definition-lemma}[theorem]{Definition-Lemma}
\newtheorem{question}[theorem]{Question}
\newtheorem{red-question}[theorem]{\textcolor{red}{Question}}
\newtheorem{conjecture}[theorem]{Conjecture}
\theoremstyle{definition}
\newtheorem{definition}[theorem]{Definition}
\newtheorem{remark}[theorem]{Remark}
\newtheorem{example}[theorem]{Example}
\def\ideal#1.{I_{#1}}
\def\ring#1.{\mathcal {O}_{#1}}
\def\ddiv{\operatorname {div}}
\def\base{\operatorname {Bs}}
\def\fring#1.{\hat{\mathcal {O}}_{#1}}
\def\proj#1.{\mathbb {P}(#1)}
\def\pr #1.{\mathbb {P}^{#1}}
\def\dpr #1.{\hat{\mathbb {P}}^{#1}}
\def\af #1.{\mathbb A^{#1}}
\def\Hz #1.{\mathbb F_{#1}}
\def\Hbz #1.{\overline{\mathbb F}_{#1}}
\def\fb#1.{\underset #1 {\times}}
\def\rest#1.{\underset {\ \ring #1.} \to \otimes}
\def\au#1.{\operatorname {Aut}\,(#1)}
\def\deg#1.{\operatorname {deg } (#1)}
\def\pic#1.{\operatorname {Pic}\,(#1)}
\def\pico#1.{\operatorname{Pic}^0(#1)}
\def\picg#1.{\operatorname {Pic}^G(#1)}
\def\ner#1.{NS (#1)}
\def\rdown#1.{\llcorner#1\lrcorner}
\def\rfdown#1.{\lfloor{#1}\rfloor}
\def\rup#1.{\ulcorner{#1}\urcorner}
\def\rcup#1.{\lceil{#1}\rceil}
\def\n1#1.{\operatorname {N_1}(#1)}  
\def\cn1#1.{\overline{\operatorname {N^1}(#1)}} 
\def\cone#1.{\operatorname {NE}(#1)}     
\def\ccone#1.{\overline{\operatorname {NE}}(#1)}
\def\none#1.{\operatorname {NF}(#1)}
\def\cnone#1.{\overline{\operatorname {NF}}(#1)}
\def\mone#1.{\operatorname {NM}(#1)} 
\def\cmone#1.{\overline{\operatorname {NM}}(#1)}
\def\coef#1.{\frac{(#1-1)}{#1}}
\def\vit#1.{D_{\langle #1 \rangle}}
\def\mm#1.{\overline {M}_{0,#1}}
\def\H1#1.{H^1(#1,{\ring #1.})}
\def\ac#1.{\overline {\mathbb F}_{#1}}
\def\adj#1.{\frac {#1-1}{#1}}
\def\spn#1.{\overline{#1}}
\def\pek#1.#2.{\Cal P^{#1}(#2)}
\def\plk#1.#2.{\Cal P^{\leq #1}(#2)}
\def\ev#1.{\operatorname{ev_{#1}}}
\def\ilist#1.{{#1}_1,{#1}_2,\dots}
\def\bminv#1.{(\nu_1,s_1;\nu_2,s_2;\dots ;\nu_{#1},s_{#1};\nu_{r+1})}
\def\zinv#1.{(\nu_1,s_1;\nu_2,s_2;\dots ;\nu_{#1},s_{#1};0)}
\def\iinv#1.{(\nu_1,s_1;\nu_2,s_2;\dots ;\nu_{#1},s_{#1};\infty)}
\def\scr #1.{\mathcal #1}
\def\llist#1.#2.{{#1}_1,{#1}_2,\dots,{#1}_{#2}}
\def\ulist#1.#2.{{#1}^1,{#1}^2,\dots,{#1}^{#2}}
\def\lomitlist#1.#2.{{#1}_1,{#1}_2,\dots,\hat {{#1}_i}, \dots, {#1}_{#2}}
\def\lomitlistz#1.#2.{{#1}_0,{#1}_1,\dots,\hat {{#1}_i}, \dots, {#1}_{#2}}
\def\loc#1.#2.{\Cal O_{#1,#2}}
\def\fderiv#1.#2.{\frac {\partial #1}{\partial #2}}
\def\deriv#1.#2.{\frac {d #1}{d #2}}
\def\map#1.#2.{#1 \longrightarrow #2}
\def\rmap#1.#2.{#1 \dasharrow #2}
\def\emb#1.#2.{#1 \hookrightarrow #2}
\def\non#1.#2.{\text {Spec }#1[\epsilon]/(\epsilon)^{#2}}
\def\Hi#1.#2.{\text {Hilb}^{#1}(#2)}
\def\sym#1.#2.{\operatorname {Sym}^{#1}(#2)}
\def\Hb#1.#2.{\text {Hilb}_{#1}(#2)}
\def\Hm#1.#2.{\Hom_{#1}(#2)}
\def\prd#1.#2.{{#1}_1\cdot {#1}_2\cdots {#1}_{#2}}
\def\Bl #1.#2.{\operatorname {Bl}_{#1}#2}
\def\pl #1.#2.{#1^{\otimes #2}}
\def\mgn#1.#2.{\overline {M}_{#1,#2}}
\def\ialist#1.#2.{{#1}_1 #2 {#1}_2, #2\dots}
\def\pair#1.#2.{\langle #1, #2\rangle}
\def\vandermonde#1.#2.{\left|
\begin{matrix}
1 & 1 & 1 & \dots & 1\\
{#1}_1 & {#1}_2 & {#1}_3 & \dots & {#1}_{#2}\\
{#1}_1^2 & {#1}_2^2 & {#1}_3^2 & \dots & {#1}_{#2}^2\\
\vdots & \vdots & \vdots & \ddots & \vdots\\
{#1}_1^{#2-1} & {#1}_2^{#2-1} & {#1}_2^{#2-1} & \dots & {#1}_{#2}^{#2-1}\\
\end{matrix}
\right|
}
\def\vandermondet#1.#2.{\left|
\begin{matrix}
1 & {#1}_1   & {#1}_1^2 & \dots & {#1}_1^{#2-1}\\
1 & {#1}_2   & {#1}_2^2 & \dots & {#1}_2^{#2-1}\\
1 & {#1}_3   & {#1}_3^2 & \dots & {#1}_3^{#2-1}\\
\vdots & \vdots & \vdots & \ddots & \vdots\\
1 & {#1}_{#2}& {#1}_{#2}^2 & \dots & {#1}_{#2}^{#2-1}\\
\end{matrix}
\right|
}
\def\gr#1.#2.{\mathbb{G}(#1,#2)}
\def\alist#1.#2.#3.{{#1}_1 #2 {#1}_2 #2\dots #2 {#1}_{#3}}
\def\zlist#1.#2.#3.{#1_0 #2 #1_1 #2\dots #2 #1_{#3}}
\def\lomitlist30#1.#2.#3.{{#1}_0,{#1}_1 #2 \dots #2\hat {{#1}_i} #2\dots #2 {#1}_{#3}}
\def\lmap#1.#2.#3.{#1 \overset{#2}{\longrightarrow} #3}
\def\mes#1.#2.#3.{#1 \longrightarrow #2 \longrightarrow #3}
\def\ses#1.#2.#3.{0\longrightarrow #1 \longrightarrow #2 \longrightarrow #3 \longrightarrow 0}
\def\les#1.#2.#3.{0\longrightarrow #1 \longrightarrow #2 \longrightarrow #3}
\def\res#1.#2.#3.{#1 \longrightarrow #2 \longrightarrow #3\longrightarrow 0}
\def\Hi#1.#2.#3.{\text {Hilb}^{#1}_{#2}(#3)}
\def\ten#1.#2.#3.{#1\underset {#2}{\otimes} #3}
\def\lomitlist30#1.#2.#3.{{#1}_0 #2 {#1}_1 #2 \dots #2 \hat {{#1}_i} #2 \dots #2 {#1}_{#3}}
\def\mderiv#1.#2.#3.{\frac {d^{#3} #1}{d #2^{#3}}}
\def\Hom{\operatorname{Hom}}
\def\Supp{\operatorname{Supp}}
\def\Bs{\operatorname{\mathbf B}}
\def\Exc{\operatorname{Exc}}
\def\dim{\operatorname{dim}}
\def\deg{\operatorname{deg}}
\def\det{\operatorname{det}}
\def\Div{\operatorname{Div}}
\def\mult{\operatorname{mult}}
\def\mob{\operatorname{Mob}}
\def\fix{\operatorname{Fix}}
\def\rest{\operatorname{res}}
\def\bs{\operatorname{Bs}}
\def\C{\mathbb C}
\def\e{\Cal E}
\def\e1{E_1}
\def\e2{E_2}
\def\mapdown#1{\big\downarrow\rlap{$\vcenter{\hbox{$\scriptstyle#1$}}$}}
\def\mapse#1{
{\vcenter{\hbox{$\mathop{\smash{\raise1pt\hbox{$\diagdown$}\!\lower7pt
\hbox{$\searrow$}}\vphantom{p}}\limits_{#1}\vphantom{\mapdown{}}$}}}}
\def\VR#1.{height#1pt&\omit&&\omit&&\omit&&\omit&&\omit&\cr}
\def\VRT#1.{height#1pt&\omit&&\omit&\cr}
\begin{document}

\title{The Minimal Model Program Revisited}

\author{Paolo Cascini}
\address{Department of Mathematics\\
Imperial College London\\
180 Queen's Gate\\
London SW7 2AZ, UK}
\email{p.cascini@imperial.ac.uk}
\author{Vladimir Lazi\'c}
\address{Mathematisches Institut\\
Universit\"at Bayreuth\\
95440 Bayreuth\\
Germany}
\email{vladimir.lazic@uni-bayreuth.de}

\thanks{The first author was partially supported by an EPSRC grant. We would like to thank the referee for many useful comments.}

\begin{abstract}
We give a light introduction to some recent developments in Mori theory, and  to our recent direct proof of the finite generation of the canonical ring.
\end{abstract}

\maketitle
\tableofcontents

\section{Introduction}

The purpose of Mori theory is to give a meaningful birational classification of a large class of algebraic varieties. This means several things: that varieties have mild singularities (in the sense explained below), that we do some surgery operations on varieties which do not make singularities worse, and that objects which are end-products of surgery have some favourable properties. Smooth projective varieties belong to this class, and even though the end-product of doing Mori surgery on a smooth variety might not be -- and usually is not -- smooth, it is nevertheless good enough for all practical purposes.

One of the main properties of these surgeries is that they do not change the canonical ring. Recall that given a smooth projective variety $X$, the {\em canonical ring\/} is
$$R(X,K_X)=\bigoplus_{m\ge 0} H^0(X,\ring X. (mK_X)).$$
More generally, for any $\mathbb Q$-divisors $D_1,\dots,D_k$ on $X$, we can consider the {\em divisorial ring\/}
$$R(X;D_1,\dots,D_k)=\bigoplus_{(m_1,\dots,m_k)\in\mathbb{N}^k} H^0\big(X,\ring X. \big(\rfdown\textstyle\sum m_iD_i.\big)\big).$$
Rings of this type were extensively studied in Zariski's seminal paper \cite{Zar62}. In particular, the following is implicit in \cite{Zar62}:
\begin{question}\label{q_finite}
Let $X$ be a smooth projective variety.
Is the canonical ring $R(X,K_X)$ finitely generated?
\end{question}
The answer is trivially affirmative for curves, and in the appendix of \cite{Zar62}, Mumford confirmed it when $X$ is a surface. Since it is invariant under operations of Mori theory, this question largely motivated conjectures of Mori theory, which postulate that Mori program should terminate with a variety with $K_X$ semiample.
This was confirmed in the case of threefolds in the 1980s by Mori et al. Very recently, the question was answered affirmatively in any dimension in
\cite{BCHM10} by extending many of the results of Mori theory from threefolds to higher dimensions. An analytic proof in the case of varieties of general type was announced in \cite{Siu06}.

One of the main inputs of Mori theory is that instead of considering varieties, we should consider {\em pairs\/} $(X,\Delta)$, where $X$ is a normal projective variety and $\Delta\geq0$ is a $\mathbb Q$-divisor on $X$ such that the {\em adjoint divisor\/} $K_X+\Delta$ is $\mathbb Q$-Cartier. In order to get a good theory one has to impose a further technical condition on $K_X+\Delta$, namely how its ramification formula behaves under birational maps. For our purposes, the condition means that $X$ is a smooth variety, the support of $\Delta$ has simple normal crossings, and the coefficients of $\Delta$ are strictly smaller than $1$, that is $\rfdown \Delta.=0$. Therefore, we might pose the following:
\begin{conjecture}\label{c_fg}
Let $X$ be a smooth projective variety. Let $B_1,\dots,B_k$ be $\mathbb{Q}$-divisors on $X$ such that $\rfdown B_i.=0$ for all $i$, and such that the support of $\sum_{i=1}^k B_i$ has simple normal crossings. Denote $D_i=K_X+B_i$ for every $i$.

Then the ring $R(X;D_1,\dots,D_k)$ is finitely generated.
\end{conjecture}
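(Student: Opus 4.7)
The plan is to proceed by induction on $n=\dim X$, with the base case $n=1$ being classical. For the inductive step, rather than attempting to show finite generation of the one ring $R(X;D_1,\dots,D_k)$ directly, I would enlarge the target and prove finite generation of the ``adjoint ring on a rational polytope'': given the closed rational polytope $\mathcal{P}\subset\bigoplus_i \mathbb{R}\cdot B_i$ of tuples $(t_1,\dots,t_k)\in [0,1]^k$ for which $B:=\sum t_iB_i$ still satisfies $\rfdown B.=0$ and has SNC support, show that the graded ring built from sections of $K_X+B$ for rational points of $\mathcal{P}$ is finitely generated. The original conjecture is recovered by intersecting $\mathcal{P}$ with the lattice generated by the $D_i$.

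The heart of the argument is a reduction to a prime divisor. Near a pseudo-effective point $D=K_X+B\in\mathcal{P}$, I would perturb $B$ within $\mathcal{P}$ so that some coefficient becomes $1$, extracting a component $S\subset\Supp B$; adjunction then produces on the smooth $(n-1)$-dimensional variety $S$ an adjoint divisor $K_S+B_S$ with $\rfdown B_S.=0$ and SNC support, so by the inductive hypothesis the restricted divisorial ring $R(S;\{(K_X+B_i)|_S\})$ is finitely generated. The key technical input one must then establish is an \emph{extension theorem}: for $m$ running over a cofinal subsemigroup, the restriction map
\[ H^0\bigl(X,\ring X.(m(K_X+B))\bigr)\longrightarrow H^0\bigl(S,\ring S.(m(K_S+B_S))\bigr) \]
is surjective. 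Granted this, finite generation of $R(X;D_1,\dots,D_k)$ follows from finite generation of the image (by induction on $S$) plus finite generation of the kernel (sections vanishing along $S$), which is controlled by an auxiliary adjoint ring on a smaller polytope inside $\mathcal{P}$, to which one applies a secondary induction on the number of boundary components or on a Kodaira-dimension-like complexity.

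The main obstacle, and the source of most of the difficulty, is this extension theorem, especially outside the big case. In the big regime one can leverage Kawamata--Viehweg vanishing together with an asymptotic multiplier-ideal construction in the style of Hacon--McKernan to lift sections, provided the stable base locus $\Bs(K_X+B)$ does not contain $S$ and the positive part of $B-S$ can be made suitably big. When $K_X+B$ is only pseudo-effective, one must combine this with Diophantine approximation inside $\mathcal{P}$ to perturb into the big cone while preserving the SNC and coefficient conditions, and then pass to the limit; here the rationality of certain decompositions of $\mathcal{P}$ becomes essential.

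Accordingly, extension, finite generation, and a third statement --- rational polyhedrality of the regions of $\mathcal{P}$ where the relevant stable base loci (or Nakayama's $\sigma$-decomposition data) are constant --- must all be proved simultaneously by a single induction on $n$. The polyhedrality feeds into extension (to reduce to a big, klt model by a rational perturbation), extension feeds into finite generation, and local finite generation near each face of $\mathcal{P}$, combined with compactness, yields global finite generation on $\mathcal{P}$ and hence Conjecture \ref{c_fg}.
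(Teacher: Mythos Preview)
The statement you are attempting to prove is labeled a \emph{Conjecture} in the paper, and the paper does not prove it.  Indeed the paper explicitly remarks that Conjecture~\ref{c_fg} implies ``most of the outstanding conjectures of the theory, in particular the Abundance conjecture.''  What the paper actually establishes is Theorem~A, which differs from the conjecture by the presence of an ample $\mathbb{Q}$-divisor $A$ in each $D_i=K_X+A+B_i$.  That ample twist is not a technicality that one can remove by the method you outline.

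Your proposal correctly isolates the crux: the extension theorem.  But the version you need---surjectivity of
\[
H^0\bigl(X,\ring X.(m(K_X+B))\bigr)\longrightarrow H^0\bigl(S,\ring S.(m(K_S+B_S))\bigr)
\]
without an ample summand and outside the big range---is precisely what is not known, and your plan to ``perturb into the big cone by Diophantine approximation and then pass to the limit'' does not close the gap.  The known lifting theorems (Siu, Kawamata, Hacon--M\textsuperscript{c}Kernan, and the version stated in the paper as Theorem~\ref{t_lifting}) all require an ample contribution, typically of the shape $K_X+S+A+B$ with $A$ ample, together with a delicate hypothesis on $\fix|{\cdot}|_S$; Example~\ref{e_canonical} in the paper shows how fragile even that statement is.  When $K_X+B$ is merely pseudo-effective and not big, your perturbation into the big cone changes the divisor by an amount that does not go to zero on the scale that controls the error in lifting, so the limit step fails; and on the non-big boundary the polyhedrality statement you want to feed back into extension is itself equivalent to deep open problems (it would give, for instance, the existence of good minimal models).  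In short, the inductive package you describe is exactly the strategy used to prove Theorem~A, but each of its three ingredients---lifting, polyhedrality of $\mathcal{E}_A(V)$, and finite generation of restricted rings---uses the ample divisor $A$ in an essential way, and no method is currently known to dispense with it.
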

Divisorial rings of this form are called {\em adjoint rings}. This conjecture obviously generalises the affirmative answer to Question \ref{q_finite}, by choosing $k=1$ and $B_1=0$.

Here we survey a different approach to the finite generation problem, recently obtained in \cite{CL10a}, which avoids all the standard difficult operations of Mori theory. In that paper, we give a new proof of the following result, only using the Kawamata-Viehweg vanishing and induction on the dimension.
\begin{theorema}\label{t_cox}
Let $X$ be a smooth projective variety. Let $B_1,\dots,B_k$ be $\mathbb{Q}$-divisors on $X$ such that $\rfdown B_i.=0$ for all $i$, and such that the support of $\sum_{i=1}^k B_i$ has simple normal crossings. Let $A$ be an ample $\mathbb{Q}$-divisor on $X$, and denote $D_i=K_X+A+B_i$ for every $i$.

Then the ring $R(X;D_1,\dots,D_k)$ is finitely generated.
\end{theorema}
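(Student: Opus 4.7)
The strategy is induction on $n = \dim X$, with the one-dimensional case being trivial. The key feature that distinguishes Theorem~\ref{t_cox} from the general Conjecture~\ref{c_fg} is the presence of the ample $\mathbb{Q}$-divisor $A$ in each $D_i$; this makes Kawamata--Viehweg vanishing available uniformly and allows lifting of sections from divisors. The plan is therefore to reduce to a smooth prime divisor $S \subset X$, where the inductive hypothesis applies, and then to glue the information back together.

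It is convenient to study all adjoint divisors $K_X + A + B$ as $B$ ranges over rational points of the polytope $\mathcal{B} = \conv(B_1, \dots, B_k) \subset \Div(X)_{\mathbb{Q}}$. By a standard Gordan-type argument, finite generation of $R(X; D_1, \dots, D_k)$ reduces to a uniform rational-polyhedral behaviour of $H^0(X, \rfdown K_X + A + B.)$ as $B$ varies in $\mathcal{B}$. After a small perturbation within $\mathcal{B}$ (which the ample part $A$ permits), one may assume that a prime component $S$ of $\Supp(\sum B_i)$ appears in some $B_i$ with coefficient close to but strictly less than $1$.

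The central technical input is lifting. For $D = K_X + A + B$, the short exact sequence
$$0 \to \ring X.(\rfdown D. - S) \to \ring X.(\rfdown D.) \to \ring S.(\rfdown D.|_S) \to 0,$$
together with Kawamata--Viehweg vanishing --- which yields $H^1(X, \ring X.(\rfdown D. - S)) = 0$ thanks to the ample $A$ --- shows that the restriction map $H^0(X, \rfdown D.) \to H^0(S, \rfdown D.|_S)$ is surjective. By adjunction, the restricted divisor takes the form $K_S + A|_S + B_S$ with $\rfdown B_S. = 0$, simple normal crossing support, and $A|_S$ ample. Hence the restricted algebra on $S$ is again an adjoint algebra in the sense of Theorem~\ref{t_cox}, and is finitely generated by the inductive hypothesis on dimension.

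Finally, one must deduce finite generation of $R$ itself from that of its restriction to $S$. Lifting a finite set of generators for the restricted algebra produces sections on $X$ that account for everything modulo the kernel of restriction, i.e.\ sections vanishing along $S$. That kernel is itself an adjoint algebra whose ``polyhedral complexity'' (essentially, the dimension of $\mathcal{B}$ or the number of facets involved) is strictly smaller, so an inner induction on this complexity, nested inside the outer induction on $\dim X$, should close the argument. The main obstacle is organising this inner induction so that the hypotheses of Theorem~\ref{t_cox} --- the ample contribution $A$ and the boundary condition $\rfdown B. = 0$ --- are preserved at every reduction step. The very presence of the ample divisor $A$, absent from Conjecture~\ref{c_fg}, is precisely what provides the flexibility to carry out these perturbations.
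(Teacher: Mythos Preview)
Your outline has the right shape---induction on $\dim X$, lifting via Kawamata--Viehweg, and a secondary induction to handle the kernel---but two of the three steps hide genuine difficulties that your sketch does not address.

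\textbf{The lifting step is not surjective as written.} You assert that $H^1\big(X,\ring X.(\rfdown D.-S)\big)=0$ by Kawamata--Viehweg. But with $D=K_X+A+B$ and $\rfdown B.=0$, the divisor $\rfdown mD.-S$ is \emph{not} of the form $K_X+\text{(ample)}+\text{(boundary with coefficients in }[0,1))$: subtracting $S$ forces the coefficient of $S$ negative. The vanishing you want would require $\mult_S B=1$, which is excluded by hypothesis. This is exactly why the paper (and \cite{CL10a}) does not use naive restriction but instead the Hacon--M\textsuperscript{c}Kernan--type extension theorem (Theorem~\ref{t_lifting}): one first passes to divisors of the form $K_X+S+A+B'$ with $S$ explicitly at coefficient~$1$, and even then one lifts only sections coming from a carefully chosen $C\le B'_{|S}$ with $(S,C)$ canonical, subject to a condition on the fixed part of nearby linear systems. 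Example~\ref{e_canonical} shows the canonicity hypothesis cannot be dropped, so there is no shortcut here.

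\textbf{The restricted algebra is not an adjoint ring on $S$.} Even once lifting is correctly formulated, the image $\rest_S R(X;D_1,\dots,D_k)$ is only a \emph{sub}algebra of an adjoint ring on $S$, not an adjoint ring itself; a subring of a finitely generated ring need not be finitely generated. The paper establishes finite generation of the restricted ring (Theorem~\ref{t_consequences}) only after proving, via Diophantine approximation, that the set of pairs $(B,C)$ for which lifting holds forms a rational polytope (Theorem~\ref{t_rational_polytope}). This polyhedrality is the technical heart of the argument and is entirely absent from your sketch.

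\textbf{The inner induction is not set up.} Your claim that the kernel ``is itself an adjoint algebra whose polyhedral complexity is strictly smaller'' is not justified: twisting down by $S$ takes you out of the class of divisors $K_X+A+B$ with $\rfdown B.=0$. The paper's mechanism (Section~4) is different: one enlarges the problem to a cone $\mathcal C$ built so that every integral point $G\in\mathcal C_i$ satisfies $G-S_i\in\mathcal C$, and then a ``zig-zag'' descent through the cone terminates in a compact base region. Organising this reduction is precisely where the higher-rank graded structure and the rational polytope results are used.
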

Theorem \ref{t_cox}, in fact, also gives the affirmative answer to Question \ref{q_finite} -- details can be found in \cite{CL10a}.
This theorem was originally proved in \cite{BCHM10} as a consequence of Mori theory, and later in \cite{Laz09} by induction on the dimension and without Mori theory; for an easy introduction to the latter circle of ideas, see \cite{Corti11}. In \cite{CL10a}, we build on and significantly simplify arguments from \cite{Laz09}.

The aim of this survey is to show that, in spite of its length and modulo some standard technical difficulties, our paper \cite{CL10a} is based on simple ideas. We hope here to make these ideas more transparent and the general flow of the proof more accessible.

Moreover, by the results of \cite{CL10}, Theorem \ref{t_cox} implies all currently known results of Mori theory, so in effect this approach inverts the conventional order of the program on its head, where earlier finite generation came at the end of the process, and not at the beginning. Further, it is shown that Conjecture \ref{c_fg} implies the most of outstanding conjectures of the theory, in particular the Abundance conjecture.

Finally, one might ask whether the rings $R(X;D_1,\dots,D_k)$ are finitely generated when the divisors $D_i$ are not adjoint, or whether the assumption on singularities can be relaxed. However, Example \ref{e_cutkosky} shows that this ring might not be finitely generated in similar situations.\\

\paragraph{\bf Notation and Conventions}
We work with algebraic varieties  defined over
$\mathbb C$. We denote by $\mathbb R_+$ and $\mathbb Q_+$ the sets of non-negative
real and rational numbers, and by $\overline{\mathcal C}$ the topological closure of a set $\mathcal C\subset\mathbb R^N$.

Let $X$ be a smooth projective variety and $\mathbf R\in \{\mathbb Z,\mathbb Q,\mathbb R\}$. We
denote by $\Div_{\mathbf R}(X)$ the group of
$\mathbf R$-divisors on $X$, and $\sim_{\mathbf R}$ and $\equiv$ denote the $\mathbf R$-linear and numerical equivalence of $\mathbb R$-divisors.
If $A=\sum a_iC_i$ and $B=\sum b_iC_i$ are two
$\mathbb{R}$-divisors on $X$, then $\rfdown A.$ is the round-down of $A$, $\rcup A.$ is the round-up of $A$, and
$$
A \wedge B= \sum \min\{a_i,b_i\} C_i.
$$
Further, if $S$ is a prime divisor on $X$, $\mult_S A$ is the order of vanishing of $A$ at the generic point of $S$.

In this paper, a {\em log pair} $(X, \Delta)$ consist of a smooth variety $X$ and an $\mathbb R$-divisor $\Delta
\ge 0$. A projective birational morphism $f\colon\map Y.X.$ is a {\em log resolution} of the
pair $(X, \Delta)$ if $Y$ is smooth, $\Exc f$ is a divisor and the support of
$f_*^{-1}\Delta+\Exc f$ has simple normal crossings.
A log pair $(X,\Delta)$ with $\rfdown \Delta.=0$ has {\em canonical\/} singularities if for every log resolution $f\colon \map Y.X.$, we have
$$K_Y+f^{-1}_*\Delta=f^*(K_X+\Delta)+E$$
for some $f$-exceptional divisor $E\geq0$.

If $X$ is a smooth projective variety and $D$ is an integral divisor, $\bs|D|$ denotes the base locus of $D$, and $\fix|D|$ and $\mob(D)$ denote the {\em fixed\/}
and {\em mobile\/} parts of $D$. Hence
$|D| = |\mob(D)| + \fix|D|$, and the base locus of $|\mob(D)|$ contains no divisors. More
generally, if $V$ is any linear system on $X$, $\fix(V )$ is the fixed divisor of $V$.
If $S$ is a prime divisor on $X$ such that $S\nsubseteq\fix|D|$, then $|D|_S$ denotes the image of the linear system $|D|$ under restriction
to $S$.

If $D$ is an $\mathbb R$-divisor on $X$, we denote by $\Bs(D)$ the intersection of the sets $\Supp D'$ for all $D'\geq0$ such that $D'\sim_\mathbb R D$, and we call $\Bs(D)$ the {\em stable base locus} of $D$; we set $\Bs(D)=X$ if no such $D'$ exists.

\begin{definition}\label{d_variouspolytopes}
Let $X$ be a smooth variety, and let $S,S_1,\dots,S_p$ be distinct prime divisors such that $S+\sum_{i=1}^p S_i$ has simple normal crossings. Let $V=\sum_{i=1}^p\mathbb R S_i\subseteq \Div_{\mathbb R}(X)$, and let $A$ be a  $\mathbb Q$-divisor on $X$.
We define
\begin{align*}
\mathcal L(V)&=\{ B=\textstyle\sum b_iS_i\in V \mid 0\le b_i\le 1\text{ for
all }i\},\\
\mathcal E_A(V)&=\{B\in\mathcal L(V)\mid \text{there exists }0\leq D\sim_{\mathbb R}K_X+A+B\},\\
\mathcal B_A^S(V)&=\{B\in\mathcal L(V)\mid
S\nsubseteq\Bs(K_X+S+A+B)\}.
\end{align*}
\end{definition}

Now, let $X$ be a smooth projective variety, let $V\subseteq \Div_{\mathbb R}(X)$ be a finite dimensional vector space, and let $\mathcal C\subseteq V$ be a {\em rational polyhedral cone\/}, i.e.\ a convex cone spanned by finitely many rational vectors.
Then the {\em divisorial ring associated to $\mathcal C$\/} is
$$R(X,\mathcal C)=\bigoplus_{D\in \mathcal C\cap \Div_{\mathbb Z}(X) }H^0(X,\ring X. (D)).$$
Note that $\mathcal{C}\cap \Div_{\mathbb Z}(X)$ is a finitely generated monoid by Gordan's lemma, and if all elements in it are multiples of adjoint divisors, the corresponding ring is an {\em adjoint ring\/}. This generalises divisorial and adjoint rings introduced earlier.

If $S$ is a prime divisor on $X$, the {\em restriction of $R(X,\mathcal C)$ to
$S$\/} is defined as
$$\rest_S R(X,\mathcal C)=\bigoplus_{D\in\mathcal{C}\cap \Div_{\mathbb Z}(X)}\rest_S
H^0\big(X,\ring X.(D)\big),$$
where $\rest_S H^0\big(X,\ring X.(D)\big)$ is the image of the restriction map
$$H^0(X,\ring X.(D))\longrightarrow H^0(S,\ring S.(D)).$$
Similarly, we denote by $\rest_S R(X;D_1,\dots,D_k)$ the restricted divisorial ring for $\mathbb Q$-divisors $D_1,\dots,D_k$ in $X$.

We finish this section with some basic definitions from convex geometry. Let $\mathcal{C}\subseteq \mathbb R^N$ be a convex set.
A subset $F\subseteq\mathcal C$ is a \emph{face} of $\mathcal{C}$ if $F$ is
convex, and
whenever $u+v\in F$ for $u,v\in\mathcal C$, then $u,v\in F$.  Note that $\mathcal C$ is itself a face of $\mathcal C$.
We say that $x\in \mathcal{C}$ is an \emph{extreme point} of $\mathcal C$ if $\{x\}$ is a face of $\mathcal{C}$.
It is a well known fact that any compact convex set $\mathcal C\subseteq \mathbb R^N$ is the convex hull of its extreme points.

A \emph{polytope} in $\mathbb R^N$ is a compact set which is
the intersection of finitely many half spaces; equivalently, it is
the convex hull of finitely many points in $\mathbb R^N$. A polytope is
\emph{rational} if it is
an intersection of rational half spaces; equivalently, it is
the convex hull of finitely many rational points in $\mathbb R^N$.

\section{Zariski decomposition on surfaces}

Zariski decomposition, introduced by Zariski in \cite{Zar62} in the case of surfaces, has proven to be a powerful tool in birational geometry.
In this section, after reviewing some basic notions about the Zariski decomposition and its generalization in higher dimension,  we show that the finite generation of the canonical ring on surfaces is an easy consequence of this decomposition, combined with the Kawamata-Viehweg vanishing theorem.
It is worth noting that while the vanishing theorem holds in any dimension, and it is the most important ingredient for the proof of the lifting theorem described in the next section, the Zariski decomposition a priori only holds on surfaces, see \cite[\S 2.3.E]{Lazarsfeld04a}.

\begin{definition}
Let $X$ be a smooth projective variety and let $D$ be a pseudo-effective $\mathbb Q$-divisor. Then $D$ admits a {\em Zariski decomposition} if there exist a nef $\mathbb Q$-divisor $P$ (positive part) and a $\mathbb Q$-divisor $N\ge 0$ (negative part) such that
\begin{enumerate}
 \item $D=P+N$,
\item for any positive integer $m$, the natural homomorphism
$$H^0(X,\mathcal O_X(\lfloor mP\rfloor))\longrightarrow H^0(X,\mathcal O_X(\lfloor mD\rfloor))$$
is an isomorphism.
\end{enumerate}
In particular, we have $R(X,D)\simeq R(X,P)$.
\end{definition}

In the case of surfaces, Zariski showed

\begin{theorem}\label{t_zariski}
Let $X$ be a smooth projective surface and let $D$ be a pseudo-effective $\mathbb Q$-divisor.
Then there exist unique $\mathbb Q$-divisors
$P$  and $N=\sum_{i=1}^r\nu_i N_i\geq0$ such that:
\begin{enumerate}
 \item  $D=P+N$,
\item $P$ is nef,
\item $P\cdot N_i=0$ for every $i$,
\item the intersection matrix $(N_i\cdot N_j)_{i,j}$ is negative definite.
\end{enumerate}
In particular, this defines a Zariski decomposition of $D$.
\end{theorem}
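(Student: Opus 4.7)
The plan is to prove uniqueness via a minimality characterization of $N$ that uses only conditions (2)--(4), and then to establish existence through an iterative construction controlled by the Hodge index theorem and linear algebra.

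\textbf{Uniqueness.} I claim that $N$ is the minimal effective $\mathbb{Q}$-divisor among all $M \geq 0$ with $D - M$ nef. Given such an $M$, write $N - M = G_+ - G_-$ with $G_\pm \geq 0$ of disjoint support; then $G_+$ is supported on components of $N$, so $P \cdot G_+ = 0$ by (3). Nefness of $D - M = P + G_+ - G_-$ gives
$$0 \leq (D - M) \cdot G_+ = G_+^2 - G_- \cdot G_+,$$
hence $G_+^2 \geq G_- \cdot G_+ \geq 0$ (the latter because $G_+, G_-$ are effective with disjoint support, and distinct irreducible curves intersect non-negatively). If $G_+ \neq 0$, condition (4) forces $G_+^2 < 0$, a contradiction. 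Hence $G_+ = 0$, i.e.\ $N \leq M$. Applying this to two Zariski decompositions $D = P + N = P' + N'$ with $M = N'$ and then with $M = N$, we get $N = N'$ and thus $P = P'$.

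\textbf{Existence.} I construct $(P, N)$ iteratively. Start with $P_0 = D$, $N_0 = 0$. At step $k$, if $P_k$ is nef, stop and set $P = P_k$, $N = N_k$. Otherwise, pick an irreducible curve $C_{k+1}$ with $P_k \cdot C_{k+1} < 0$; an auxiliary argument using the pseudo-effectivity of $D$ and the Hodge index theorem shows $C_{k+1}^2 < 0$ and that the intersection matrix of the accumulated curves $\{C_1,\dots,C_{k+1}\}$ remains negative definite. Update the coefficients by solving the linear system $P_{k+1} \cdot C_i = 0$ for $i = 1, \ldots, k+1$, where $P_{k+1} = D - \sum_j \nu_j^{(k+1)} C_j$; this system has a unique solution by invertibility of the intersection matrix. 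The essential linear-algebra input is that the inverse of a symmetric negative-definite matrix with non-negative off-diagonal entries has non-positive entries, which together with an inductive analysis of the right-hand side guarantees $\nu_j^{(k+1)} \geq 0$. The process terminates in at most $\rho(X)$ steps since negative-definiteness bounds the number of accumulating curves.

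\textbf{Main obstacle.} The principal technical difficulty is ensuring non-negativity of the coefficients throughout the iteration: the right-hand side of the updated linear system must satisfy the correct sign condition at each stage, which does not follow directly from the local negativity $P_k \cdot C_{k+1} < 0$. The monotonicity property of inverses of matrices with the above structure resolves this, showing that enlarging the accumulated set can only increase (not decrease) the coefficients. An alternative, more abstract route defines $N = \sum_C \sigma_C(D) \cdot C$ via the asymptotic multiplicities $\sigma_C(D) = \inf\{\mult_C D' : D' \geq 0,\ D' \sim_\mathbb{Q} D\}$ (which on a surface are positive for only finitely many $C$) and then verifies the four conditions a posteriori, again with the Hodge index theorem supplying negative-definiteness.
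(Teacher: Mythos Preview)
The paper does not actually prove Theorem~\ref{t_zariski}: it is stated as Zariski's classical result with a reference to \cite{Zar62}, and the text moves on immediately to higher-dimensional analogues. So there is no ``paper's own proof'' to compare against; your write-up is a self-contained argument, and I will assess it as such.

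Your uniqueness argument is correct and is the standard minimality characterisation. The existence sketch follows the well-known iterative approach (essentially Bauer's simplification of Zariski's original proof), and the linear-algebra point you isolate---that the inverse of a negative definite matrix with non-negative off-diagonal entries has non-positive entries, forcing the updated coefficients to be non-negative and monotone---is indeed the crux. Two gaps are worth flagging. First, your iteration is clean when $D$ is effective (or $\mathbb Q$-linearly equivalent to an effective divisor), since then one can argue directly that any curve $C$ with $P_k\cdot C<0$ must be a component of $D$ and hence has $C^2<0$; for a merely pseudo-effective $D$ one usually invokes Fujita's limiting argument (apply the effective case to $D+\varepsilon A$ and let $\varepsilon\to0$), which you do not mention. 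Second, you do not address the ``In particular'' clause: the theorem asserts that conditions (1)--(4) yield a Zariski decomposition in the sense of the preceding Definition, i.e.\ that $H^0(X,\mathcal O_X(\lfloor mP\rfloor))\to H^0(X,\mathcal O_X(\lfloor mD\rfloor))$ is an isomorphism for all $m$. This requires the separate (short) argument that every effective $D'\sim_{\mathbb Q}D$ satisfies $D'\geq N$, which one gets by intersecting $D'-N$ against the $N_i$ and using (3) and (4).
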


Unfortunately, in higher dimensions it is easy to find examples of divisors which do not admit a Zariski decomposition, even after taking their pull-back to a suitable modification, see \cite{Nakayama04}. This birational version of the decomposition is usually called Cutkosky-Kawamata-Moriwaki decomposition.

However, if $D$ is a big $\mathbb Q$-divisor on a smooth surface $X$ and if $N=\sum\nu_i N_i$ is the negative part of $D$ as in Theorem \ref{t_zariski}, it can be shown that
$$\nu_i=\limsup\limits_{m\rightarrow\infty}\frac1m \mult_{N_i}|mD|,$$
see \cite[Corollary 2.3.25]{Lazarsfeld04a}. This motivates the following definition
 of the positive and negative parts of a pseudo-effective divisor over a smooth projective variety. These were introduced by Nakayama in \cite{Nakayama04} (see also \cite{Boucksom04} for an analytic construction).

\begin{definition}
Let $X$ be a smooth projective variety, let $A$ be an ample $\mathbb{R}$-divisor,
and let $\Gamma$ be a prime divisor. If $D\in\Div_\mathbb{R}(X)$ is a big divisor, define
$$
\sigma'_\Gamma(D)=\inf\{\mult_\Gamma D'\mid 0\leq D'\sim_{\mathbb R} D\}.
$$
If $D\in\Div_\mathbb{R}(X)$ is pseudo-effective, set
$$
\sigma_\Gamma(D)=\lim_{\varepsilon\to 0} \sigma'_\Gamma(D+\varepsilon A).$$
Then the {\em negative part\/} of $D$ is defined as $$\textstyle N_\sigma(D)=\sum_\Gamma\sigma_\Gamma(D)\cdot\Gamma,
$$
where the sum runs over all prime divisors $\Gamma$ on $X$.
The {\em positive part\/} of $D$ is  given by $P_\sigma (D)=D-N_\sigma(D)$.
\end{definition}

Then one easily shows that $\sigma'_\Gamma(D)=\sigma_\Gamma(D)$ when $D$ is big. Note that, unlike in the case of surfaces,
$P_\sigma(D)$ is in general not nef. The main reason for this is that on surfaces, every mobile divisor is nef since curves and divisors coincide. Nevertheless, this decomposition turned out to be very useful in the recent advances of the Minimal Model Program, e.g.\ see \cite{BCHM10}. The following result summarises its main properties, see \cite{Nakayama04}.

\begin{lemma}\label{d_sigma}
Let $X$ be a smooth projective variety, and let $D$ be a pseudo-effective $\mathbb R$-divisor.

Then $N_\sigma(D)$ depends only on the numerical class of $D$. Moreover, if $\Gamma$ is a prime divisor, then
the function $\sigma_\Gamma$ is homogeneous of degree one, convex and lower semi-continuous on the cone of
pseudo-effective divisors on $X$, and continuous on the cone of big divisors.
\end{lemma}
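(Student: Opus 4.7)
The plan is to first establish the formal properties of $\sigma'_\Gamma$ on the open big cone, and then transfer them to $\sigma_\Gamma$ on the pseudo-effective cone by passing to the limit in its definition. From the definition as an infimum of $\mult_\Gamma$ over effective $\mathbb{R}$-linear representatives, one sees directly that $\sigma'_\Gamma$ is homogeneous of degree one on big divisors and satisfies subadditivity $\sigma'_\Gamma(D_1+D_2)\le\sigma'_\Gamma(D_1)+\sigma'_\Gamma(D_2)$, since the sum of effective representatives of $D_1$ and $D_2$ is an effective representative of $D_1+D_2$; together these yield convexity on the big cone. A further useful observation is that $\sigma'_\Gamma$ vanishes on every $\mathbb{R}$-ample divisor, because such a divisor is a positive $\mathbb{R}$-linear combination of very ample integral divisors, each of which by Bertini has an effective representative not containing $\Gamma$.

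The main step is numerical invariance. Given $D_1\equiv D_2$ both big, set $T=D_1-D_2\equiv 0$. For any ample $\mathbb{Q}$-divisor $A$ and any $\eta>0$, the $\mathbb{R}$-divisor $\eta A + T$ is numerically ample, hence $\mathbb{R}$-ample by Kleiman's criterion, so the vanishing above yields $\sigma'_\Gamma(\eta A+T)=0$. The identity $D_1+\eta A=D_2+(\eta A+T)$ combined with subadditivity then gives
\[
\sigma'_\Gamma(D_1+\eta A)\le \sigma'_\Gamma(D_2)+\sigma'_\Gamma(\eta A+T)=\sigma'_\Gamma(D_2),
\]
and letting $\eta\to 0$ produces $\sigma_\Gamma(D_1)\le\sigma'_\Gamma(D_2)$. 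Swapping $D_1$ and $D_2$ gives equality, which shows simultaneously that the limit defining $\sigma_\Gamma$ is independent of the auxiliary ample divisor $A$, and that $\sigma_\Gamma$ -- hence $N_\sigma$ -- depends only on the numerical class of $D$.

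For the remaining properties, monotonicity of $\varepsilon \mapsto \sigma'_\Gamma(D+\varepsilon A)$ follows by writing $D+\varepsilon_1 A=(D+\varepsilon_2 A)+(\varepsilon_1-\varepsilon_2)A$ for $\varepsilon_1\ge\varepsilon_2>0$ and combining subadditivity with the vanishing on ample divisors, so the limit in the definition of $\sigma_\Gamma$ is in fact a supremum. Homogeneity and convexity of $\sigma_\Gamma$ then pass to the limit from those of $\sigma'_\Gamma$. Since $\sigma'_\Gamma$ is convex on the open big cone, it is automatically continuous there; together with the monotonicity above this gives $\sigma_\Gamma(D)=\sigma'_\Gamma(D)$ for big $D$, so $\sigma_\Gamma$ is continuous on the big cone. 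Finally, $\sigma_\Gamma$ is the pointwise supremum of the functions $D\mapsto \sigma'_\Gamma(D+\varepsilon A)$, each of which is continuous on the whole pseudo-effective cone (since $D+\varepsilon A$ stays in the big cone as $D$ varies), so $\sigma_\Gamma$ is lower semi-continuous there. I expect the principal obstacle to be the careful deployment of the $\mathbb{R}$-divisor version of Kleiman's criterion to justify $\sigma'_\Gamma(\eta A+T)=0$ for numerically trivial $T$, since this vanishing is what really powers numerical invariance.
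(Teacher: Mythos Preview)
The paper does not actually prove this lemma; it merely states it and refers the reader to \cite{Nakayama04} for the proof. Your argument is essentially the standard one found there, and the overall strategy---establish homogeneity, subadditivity, and vanishing on ample classes for $\sigma'_\Gamma$, then pass to the limit---is correct.

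There is, however, a small logical slip in the numerical--invariance step. From $\sigma'_\Gamma(D_1+\eta A)\le\sigma'_\Gamma(D_2)$ you correctly obtain $\sigma_\Gamma(D_1)\le\sigma'_\Gamma(D_2)$, and swapping gives $\sigma_\Gamma(D_2)\le\sigma'_\Gamma(D_1)$; but these two inequalities alone do \emph{not} yield $\sigma_\Gamma(D_1)=\sigma_\Gamma(D_2)$, nor independence of $A$, until you know $\sigma'_\Gamma=\sigma_\Gamma$ on big divisors. You do prove that identity later (via convexity $\Rightarrow$ continuity on the open big cone), so the pieces are all present---they just need to be reordered: establish continuity of $\sigma'_\Gamma$ and hence $\sigma'_\Gamma=\sigma_\Gamma$ on the big cone first, and then your two swapped inequalities immediately give numerical invariance. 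Alternatively, sharpen the inequality directly by writing $D_1+\eta A=(D_2+\eta' A')+(\eta A-\eta' A'+T)$ with $\eta'\ll\eta$, so that the bracketed term is ample; this yields $\sigma'_\Gamma(D_1+\eta A)\le\sigma'_\Gamma(D_2+\eta' A')$, and the double limit $\eta'\to0$, $\eta\to0$ gives $\sigma_\Gamma^A(D_1)\le\sigma_\Gamma^{A'}(D_2)$, whence equality by symmetry and independence of the auxiliary ample divisor in one stroke.

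One further remark: your appeal to ``convex $\Rightarrow$ continuous'' is valid because one only needs continuity along finite-dimensional subspaces of $\Div_\mathbb{R}(X)$, where the big cone is open and $\sigma'_\Gamma$ is finite; it would be worth saying this explicitly, since $\Div_\mathbb{R}(X)$ itself is infinite-dimensional.
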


We now describe the role of Zariski decompositions in
the study of the canonical ring of a smooth projective variety.
By the Basepoint free theorem \cite[Theorem 3.3]{km98}, it follows immediately that if $X$ is a smooth minimal projective variety of general type, i.e. if $K_X$ is big and nef, then $K_X$ is semi-ample, and thus the canonical ring $R(X,K_X)$ is finitely generated.
In  \cite{Kawamata87}, Kawamata generalized this result by showing that if $X$ is a smooth projective variety of general type, the existence of a Zariski decomposition for $K_X$ implies the finite generation of the canonical ring. Using the same techniques, we next show how Theorem \ref{t_zariski} implies the finite generation of the canonical ring on surfaces.

We first recall the following important  result of Zariski \cite{Zar62} on the structure of base loci of linear systems.

\begin{theorem}\label{t_baselocus}
Let $X$ be a projective variety, and let $D$ be a Cartier divisor on $X$ such that the base locus of $D$ is a finite set.

Then $D$ is semiample.
\end{theorem}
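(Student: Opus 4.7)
The plan is to reduce to the case where $\bs|D|$ already equals the stable base locus, and then to exploit the rational map defined by $|D|$ to force this locus to be empty.

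\textbf{Step 1.} Multiplication of sections yields $\bs|(a+b)D|\subseteq\bs|aD|\cap\bs|bD|$, so every $\bs|mD|$ is contained in the finite set $\bs|D|$. The descending chain $\{\bs|m!D|\}_{m\geq 1}$ therefore stabilises on the Noetherian space $X$, and after replacing $D$ by a suitable positive multiple I may assume $\bs|D|=\Bs(D)$. The problem reduces to showing this common finite set is empty.

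\textbf{Step 2.} Let $\phi=\phi_{|D|}\colon X\dashrightarrow\mathbb{P}^N=\mathbb{P}(H^0(X,D)^\vee)$, with indeterminacy locus $\bs|D|$; let $Y\subseteq\mathbb{P}^N$ be the closure of the image of $\phi|_{X\setminus\bs|D|}$ and $H=\mathcal{O}_{\mathbb{P}^N}(1)|_Y$ the induced very ample line bundle. Taking the closure of the graph of $\phi$ in $X\times Y$ gives a projective birational morphism $\pi\colon\tilde X\to X$, isomorphic over $X\setminus\bs|D|$, together with a morphism $\tilde\phi\colon\tilde X\to Y$. Since the two line bundles $\pi^*\mathcal{O}_X(D)$ and $\tilde\phi^*H$ agree on the dense open set $\pi^{-1}(X\setminus\bs|D|)$, their difference is an effective $\pi$-exceptional divisor $F\geq 0$ supported on $\pi^{-1}(\bs|D|)$, satisfying $\pi^*D\sim\tilde\phi^*H+F$. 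In particular, $\tilde\phi^*H=\pi^*D-F$ is base-point free on $\tilde X$.

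\textbf{Step 3.} Suppose for contradiction $p\in\Bs(D)$. The task is to produce, for some $m\geq 1$, a section $s\in H^0(X,mD)$ with $s(p)\neq 0$. Pulling back a section of $|mH|$ from $Y$ gives a section of $m\tilde\phi^*H=\pi^*(mD)-mF$ on $\tilde X$ that does not vanish at a chosen preimage $\tilde p\in\pi^{-1}(p)$; multiplying by the canonical section of $\mathcal{O}(mF)$ and pushing forward via $\pi$ (using $\pi_*\mathcal{O}_{\tilde X}=\mathcal{O}_X$, reducing to the normal case) yields a section of $mD$, but one vanishing at $p$ because of the factor from $\sigma_{mF}$.

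The main obstacle is precisely overcoming this vanishing: the direct pullback-then-pushdown argument yields sections of $mD$ that all vanish along $\bs|D|$. The resolution relies on the contrast between $h^0(Y,mH)\sim m^{\dim Y}$, which grows polynomially without bound, and the fixed finite length of the $0$-dimensional obstruction sheaf supported on $\bs|D|$; for $m$ sufficiently large this forces the existence of sections of $mD$ not lying in the image of $\tilde\phi^*H^0(Y,mH)\cdot\sigma_{mF}$, and such sections cannot vanish on all of $\bs|D|$. Running this argument at each of the finitely many points of $\bs|D|$ and taking a generic linear combination produces a single section of $mD$ not vanishing anywhere on $\bs|D|$, contradicting $p\in\Bs(D)$ and proving that $D$ is semi-ample.
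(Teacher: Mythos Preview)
The paper does not prove this theorem; it is simply recalled as a classical result of Zariski with a citation to \cite{Zar62}. So there is no in-paper argument to compare against, and your attempt must be judged on its own.

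Steps 1 and 2 are standard and essentially fine (modulo routine care with normality, which you flag). Step 3, however, contains a genuine gap that cannot be repaired along the lines you suggest.

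After Step 1 you have arranged $\bs|D|=\Bs(D)$. By definition of the stable base locus, \emph{every} section of $mD$, for \emph{every} $m$, vanishes along $\Bs(D)$. Consequently no growth or dimension-count argument can ever produce a section of $mD$ not vanishing at $p\in\Bs(D)$; if such a section existed, $p$ would not lie in $\Bs(D)$ in the first place. Concretely, your claimed inequality $h^0(X,mD)>h^0(Y,mH)$ need not hold: since $\bs|\pi^*mD|=\pi^{-1}(\bs|mD|)=\Supp F$, the fixed part of $|mM+mF|$ is supported on $F$, and it is entirely consistent with your hypotheses that $\fix|mM+mF|=mF$ for all $m$, i.e.\ $h^0(X,mD)=h^0(\tilde X,mM)=h^0(Y,mH)$. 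Even when the inequality \emph{is} strict, the extra sections still vanish along $\Supp F$ (just to lower order), so your assertion that ``such sections cannot vanish on all of $\bs|D|$'' is unjustified and in general false. The ``fixed finite length of the $0$-dimensional obstruction sheaf'' is not a well-defined quantity bounding what you need.

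The missing idea in Zariski's argument is geometric, not numerical. For each $p\in\bs|D|$, the restriction $M|_{E_p}$ to the fibre $E_p=\pi^{-1}(p)$ is simultaneously nef (being $\tilde\phi^*H$) and linearly equivalent to $-F|_{E_p}\leq 0$ (since $\pi^*D$ is trivial on $E_p$). Hence $M$ is numerically trivial on $E_p$, so $\tilde\phi$ contracts $E_p$ to a point. As $\pi$ and $\tilde\phi$ then have the same fibres, $\tilde\phi$ factors through $\pi$ (here one uses normality of $X$), giving a morphism $\psi\colon X\to Y$ with $D\sim\psi^*H$ basepoint free --- contradicting $\bs|D|\neq\emptyset$.
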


\begin{corollary}\label{c_mobilefix}
Let $X$ be a smooth projective surface and let $D$ be a $\mathbb Q$-divisor on $X$ such that $\kappa(X,D)\ge 0$.

Then, for every sufficiently divisible positive integer $k$, there exist a semiample $\mathbb Q$-divisor $M_k$
whose coefficients are at most $1/k$, and a divisor $F_k\geq0$ such that $\Supp F_k\subseteq \Bs(D)$ and $D\sim_\mathbb{Q}M_k+F_k$. Furthermore, if the support of $D$ has simple normal crossings, then the support of $M_k+F_k$ has simple normal crossings.
\end{corollary}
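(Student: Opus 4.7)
My strategy is to decompose $kD$ into its mobile and fixed parts for sufficiently divisible $k$, to exploit the fact that on a surface the mobile part of any non-empty linear system is automatically semiample, and then to construct $M_k$ from a general element of a basepoint-free multiple of the mobile part.

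Since $\kappa(X,D)\ge 0$ there exists $k_0>0$ such that $k_0 D$ is integral and $|k_0 D|\ne\emptyset$. For every positive integer $k$ divisible by $k_0$, write
$$kD=M+F,\qquad M:=\mob(kD),\quad F:=\fix|kD|.$$
By construction $\bs|M|$ contains no divisorial component; since $X$ is a surface, $\bs|M|$ is then a finite set of points, and Theorem~\ref{t_baselocus} yields that $M$ is semiample. This is the only step that genuinely uses $\dim X=2$.

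Next, pick $\ell$ with $\ell M$ basepoint-free and, by Bertini, a general member $G\in|\ell M|$; then $G$ is reduced, and if $\Supp D$ has simple normal crossings one may further arrange that $\Supp(G)\cup\Supp(D)$ is SNC. Put
$$M_k:=\frac{1}{k\ell}\,G,\qquad F_k:=\frac{1}{k}\,F.$$
Every nonzero coefficient of $M_k$ equals $1/(k\ell)\le 1/k$; since $k\ell M_k=G$ lies in the basepoint-free system $|\ell M|$, the $\mathbb{Q}$-divisor $M_k$ is semiample; $F_k\ge 0$; and $M_k+F_k\sim_{\mathbb Q}\tfrac{1}{k}(M+F)=D$.

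The one subtle point I expect is the inclusion $\Supp(F_k)\subseteq\Bs(D)$: for an arbitrary $k$, a fixed component of $|kD|$ need not lie in the stable base locus. The plan is to enlarge $k$ further so that $\tfrac{1}{k}\mult_\Gamma\fix|kD|$ attains the asymptotic value $\sigma_\Gamma(D):=\inf_m\tfrac{1}{m}\mult_\Gamma\fix|mD|$ for every prime divisor $\Gamma$; this can be achieved simultaneously since only finitely many $\Gamma$ occur as fixed components. Then $F_k=\sum_\Gamma\sigma_\Gamma(D)\,\Gamma$, and for a component $\Gamma$ of $F_k$ one has $\sigma_\Gamma(D)>0$; any effective $D'\sim_{\mathbb R}D$ satisfies $D'\equiv D$, so the numerical invariance of $\sigma$ from Lemma~\ref{d_sigma} combined with the elementary inequality $\sigma_\Gamma(D')\le\mult_\Gamma D'$ yields $\mult_\Gamma D'\ge\sigma_\Gamma(D)>0$, hence $\Gamma\subseteq\Bs(D)$. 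Finally $\Supp(F_k)\subseteq\Supp(D)$ because $F\le kD$, which together with the Bertini choice of $G$ gives the simple normal crossings conclusion.
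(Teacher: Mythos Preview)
Your argument follows the paper's proof closely: decompose a multiple of $D$ into mobile and fixed parts, use that on a surface the mobile part has finite base locus so Theorem~\ref{t_baselocus} makes it semiample, and then apply Bertini to a basepoint-free multiple. The only real divergence is in how you secure $\Supp F_k\subseteq\Bs(D)$, and there your argument has a gap.

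You claim one can choose $k$ so that $\tfrac1k\mult_\Gamma\fix|kD|$ \emph{equals} the infimum $\sigma_\Gamma(D)=\inf_m\tfrac1m\mult_\Gamma\fix|mD|$; the justification ``only finitely many $\Gamma$ occur'' addresses simultaneity but not whether the infimum is attained for a single $\Gamma$. In general it need not be: writing the Zariski decomposition $D=P+N$, for divisible $m$ one has $\fix|mD|=mN+\fix|mP|$, so attainment at $\Gamma$ requires $\Gamma\nsubseteq\bs|mP|$ for some $m$; when $P$ is nef but not semiample this can fail. You also silently identify your $\sigma_\Gamma$ (via $\fix|mD|$) with the paper's (via $\mathbb R$-linear equivalence) when invoking Lemma~\ref{d_sigma}, which needs its own argument. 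The paper avoids all of this with one line: by Noetherianity there exists $m$ with $\bs|mD|=\Bs(D)$, and then every component of $F=\fix|mD|$ lies in $\bs|mD|=\Bs(D)$ automatically. Replacing your $\sigma_\Gamma$ detour by this choice of $m$ fixes the proof and makes the appeal to Lemma~\ref{d_sigma} unnecessary.
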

\begin{proof}
Let $m$ be a positive integer such that $\Bs(D)=\bs|mD|$. If we write
$$mD=M+F,$$
 where $M$ is the mobile part of $|mD|$ and $F$ is its fixed part, then $M$ is semiample by Theorem \ref{t_baselocus} and $\Supp F\subseteq \Bs(D)$. Thus, for every sufficiently divisible positive integer $k$,
the linear system $|kM|$ is basepoint free. Bertini's theorem implies that a general section $N_k$ of $|kM|$ is reduced and irreducible. Thus,  it is enough to define $M_k=N_k/km$ and $F_k=F/m$. The last claim follows from the construction.
\end{proof}

Our goal is to show that for any smooth projective surface $X$ and for any divisor $\Delta$ on $X$ such that the support of $\Delta$ has simple normal crossings and $\rfdown\Delta.=0$, the ring $R(X,K_X+\Delta)$ is finitely generated.
As in the higher dimensional case, see \cite{BCHM10} and \cite{CL10a}, by the existence of log resolutions and by a result of Fujino and Mori \cite{FM00}, we may and will assume, without loss of generality, that the
divisor $\Delta$ can  be written as $\Delta=A+B$, where $A$ is an ample $\mathbb Q$-divisor and $B$ is a $\mathbb Q$-divisor such that the support of $B$ has simple normal crossings and $\rfdown B.=0$.

First we need an easy result about adjoint divisors on curves.

\begin{lemma}\label{l_adjointcurves}
Let $X$ be a smooth projective curve of genus $g\geq1$, let $D$ be an integral divisor, and let $\Theta$ be a $\mathbb Q$-divisor such that such that $\deg\Theta>0$ and
$D\sim_\mathbb{Q} K_X+\Theta$.

Then $H^0(X,\ring X. (D))\neq0$.
\end{lemma}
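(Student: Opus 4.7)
The statement is essentially an exercise in Riemann--Roch, so the plan is very short. I will compute $\deg D$ and then compare it to $2g-2$ so that the dualizing side of Riemann--Roch vanishes.

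First I would observe that since $D\sim_\mathbb{Q} K_X+\Theta$, taking degrees gives
\[
\deg D=\deg K_X+\deg\Theta=2g-2+\deg\Theta.
\]
Because $\deg\Theta>0$ we conclude $\deg D>2g-2$. The divisor $D$ is integral, so $\deg D\in\mathbb{Z}$, and hence in fact $\deg D\geq 2g-1$.

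Next I would apply Riemann--Roch on the curve $X$:
\[
h^0(X,\mathcal{O}_X(D))-h^0(X,\mathcal{O}_X(K_X-D))=\deg D-g+1.
\]
The degree of $K_X-D$ is $(2g-2)-\deg D\leq -1<0$, so $h^0(X,\mathcal{O}_X(K_X-D))=0$. Therefore
\[
h^0(X,\mathcal{O}_X(D))=\deg D-g+1\geq (2g-1)-g+1=g\geq 1,
\]
which is the desired conclusion.

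There is no real obstacle here; the only subtle point is where the hypothesis $g\geq 1$ is used. It enters through the integrality of $\deg D$ upgrading the strict inequality $\deg D>2g-2$ to $\deg D\geq 2g-1$, and then through the bound $g\geq 1$ at the end. Without $g\geq 1$ the statement is false: on $\mathbb{P}^1$ any $\mathbb{Q}$-divisor $\Theta$ with $0<\deg\Theta<2$ produces an integral $D$ with $\deg D<0$ and hence $h^0(D)=0$.
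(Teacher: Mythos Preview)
Your argument is correct and is essentially the same Riemann--Roch computation as in the paper; the only cosmetic difference is that you explicitly kill the $h^0(K_X-D)$ term, whereas the paper simply uses the inequality $h^0(D)\geq\deg D-g+1$ and the chain $\deg D-g+1=g-1+\deg\Theta>g-1\geq0$. One small inaccuracy in your closing remark: for an integral $D$ with $D\sim_{\mathbb Q}K_X+\Theta$ to exist on $\mathbb P^1$ one needs $\deg\Theta\in\mathbb Z$, so the relevant counterexample is $\deg\Theta=1$, not an arbitrary rational in $(0,2)$; this does not affect the proof itself.
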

\begin{proof}
By Riemann-Roch theorem we have
$$h^0(X,\ring X. (D))\geq\deg D-g+1=2g-2+\deg\Theta-g+1>g-1\geq0,$$
which proves the lemma.
\end{proof}

We recall the following particular version of the Kawamata-Viehweg vanishing theorem which we need in this paper.

\begin{theorem}
Let $X$ be a smooth variety and let $B=\sum b_jB_j$ be a $\mathbb Q$-divisor whose support has simple normal crossings and such that $0\leq b_j\leq1$ for each $j$. Let $A$ be an ample $\mathbb Q$-divisor and assume $D$ is an integral divisor such that $D\sim_\mathbb Q K_X+B+A$.

Then $H^i(X,\mathcal O_X(D))=0$ for every $i>0$.
\end{theorem}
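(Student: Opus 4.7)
This is a form of the Kawamata--Viehweg vanishing theorem, so the plan is to isolate the components of $B$ with coefficient exactly $1$ and peel them off inductively, reducing to the standard ``KLT'' version that is itself handled by the classical cyclic cover trick.

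First, split $B$ as $B = T + B'$, where $T = \sum_{b_j = 1} B_j$ is the reduced divisor of coefficient-$1$ components and $B'$ has SNC support with all coefficients in $[0,1)$. The base case is $T=0$. Here $D - K_X \sim_\mathbb Q A + B'$ with $A$ ample and $\lfloor B' \rfloor = 0$, and I would prove $H^i(X, \mathcal O_X(D))=0$ for $i>0$ by the usual reduction to Kodaira vanishing: choose $m$ so that $mA$ and $mB'$ are integral, pick a general smooth $H \in |mA|$ meeting $\Supp(B')$ transversally by Bertini, and form the $m$-fold cyclic cover $\pi \colon Y \to X$ determined by the section of $\mathcal O_X(m(A+B'))$ with zero divisor $H + mB'$. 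After passing to a log resolution $\mu \colon \tilde Y \to Y$, the ramification computation shows that the cohomology $H^i(X, \mathcal O_X(D))$ sits as a direct summand of $H^i(\tilde Y, \mathcal O_{\tilde Y}(\tilde D))$ for an integral divisor $\tilde D$ with $\tilde D - K_{\tilde Y}$ nef and big; ordinary Kodaira vanishing on $\tilde Y$ then closes the base case.

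For the general case $T = S_1 + \dots + S_r$ with $r \ge 1$, I would argue by induction on $r$ combined with induction on $\dim X$. Pick one component $S = S_r$ and consider the restriction sequence
$$0 \longrightarrow \mathcal O_X(D - S) \longrightarrow \mathcal O_X(D) \longrightarrow \mathcal O_S(D|_S) \longrightarrow 0.$$
The divisor $D - S$ is integral and satisfies $D - S \sim_\mathbb Q K_X + A + (B - S)$, where $B - S$ has one fewer coefficient-$1$ component; so the vanishing of higher cohomology of $\mathcal O_X(D-S)$ follows by the inductive hypothesis on $r$. For the quotient, adjunction on the SNC pair gives $K_S \sim (K_X + S)|_S$, hence
$$D|_S \sim_\mathbb Q K_S + A|_S + (B - S)|_S,$$
where $A|_S$ is ample on $S$ and $(B-S)|_S$ is an SNC $\mathbb Q$-divisor on $S$ with coefficients in $[0,1]$. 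The inductive hypothesis on $\dim X$ then delivers $H^i(S, \mathcal O_S(D|_S)) = 0$ for $i > 0$, and the long exact sequence of cohomology yields the conclusion.

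The base of the induction on $\dim X$ is the case of curves, which follows from Serre duality once one notes that $\deg D > 2g - 2$ (because $D - K_X \sim_\mathbb Q A + B$ has strictly positive degree and $\deg D$ is an integer). The main technical obstacle is entirely in the cyclic-cover step: one must verify that after normalization and resolution $\tilde D - K_{\tilde Y}$ really is nef and big, and that $\mathcal O_X(D)$ appears as a summand of the direct image, which requires a careful bookkeeping of the ramification against the fractional part $B'$. Everything else is formal.
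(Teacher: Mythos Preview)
The paper does not actually prove this theorem: it is merely recalled as a known result (``We recall the following particular version of the Kawamata--Viehweg vanishing theorem\dots'') and no argument is given. So there is no proof in the paper to compare your proposal against.

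Your sketch is the standard route and is essentially correct. Splitting off the coefficient-$1$ part $T$ and peeling components via the restriction sequence, combined with adjunction and induction on $\dim X$, is exactly how one reduces the ``log canonical'' statement (coefficients $\le 1$) to the ``klt'' statement (coefficients $<1$). One small imprecision: in the base case $T=0$, after the cyclic cover and resolution you end up with $\tilde D - K_{\tilde Y}$ nef and big, not ample, so what you invoke at the end is not literally ``ordinary Kodaira vanishing'' but the nef-and-big integral case, which in turn is proved by a further perturbation/limiting argument (or by Grauert--Riemenschneider plus Serre vanishing). You flag the cover bookkeeping as the technical crux, which is right; just be aware that the final appeal is to this strengthened form of Kodaira rather than the classical ample version.
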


Finally, we have:

\begin{theorem}
Let $X$ be a smooth projective surface, and let $B$ be a $\mathbb Q$-divisor such that the support of $B$ has simple normal crossings and $\rfdown B.=0$.
Let $A$ be an ample $\mathbb Q$-divisor and let $\Delta=A+B$.

Then the ring $R(X,K_X+\Delta)$ is finitely generated.
\end{theorem}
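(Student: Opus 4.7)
If $K_X + \Delta$ is not pseudo-effective, then $R(X, K_X + \Delta)$ is just the base field and finite generation is immediate. So assume $K_X + \Delta$ is pseudo-effective, and apply Theorem \ref{t_zariski} to write
\[
K_X + \Delta = P + N,
\]
with $P$ nef, $N = \sum \nu_i N_i \geq 0$, $P \cdot N_i = 0$ for every $i$, and $(N_i \cdot N_j)$ negative definite. Property~(2) of the Zariski decomposition gives a natural isomorphism $R(X, K_X+\Delta) \simeq R(X, P)$, so it suffices to prove that $R(X, P)$ is finitely generated, and by Theorem \ref{t_baselocus} it is enough to find a positive integer $m$ such that the base locus of $|mP|$ is finite.

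Suppose for contradiction that some prime divisor $C$ lies in $\Bs(P)$. The strategy is to produce, for some $m$ sufficiently divisible, a global section of $\mathcal O_X(\lfloor mP \rfloor)$ that does not vanish along $C$. To do so one rewrites $\lfloor mP \rfloor - C$ in an ``adjoint'' form:
\[
\lfloor mP \rfloor - C \sim_{\mathbb Q} K_X + A' + B',
\]
with $A'$ an ample $\mathbb Q$-divisor and $B'$ a $\mathbb Q$-divisor with simple normal crossing support and $\lfloor B' \rfloor = 0$. The rearrangement is carried out by combining $P = K_X + A + B - N$ with the decomposition $P \sim_{\mathbb Q} M_k + F_k$ from Corollary \ref{c_mobilefix} applied to $P$, for suitably large $k$ and $m$: the semiample small-coefficient divisor $(m-1)M_k$ is used to convert $N$ into a nef contribution while preserving the ample piece $A$, and the orthogonality $P \cdot N_i = 0$ is what ensures that $C$ (which one first argues must occur in the support of $N$, since nef divisors have zero intersection with curves in their stable base locus on a surface) does not destroy the fractional SNC structure of $B'$. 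Kawamata--Viehweg vanishing then yields
\[
H^1\bigl(X, \mathcal O_X(\lfloor mP \rfloor - C)\bigr) = 0,
\]
so the restriction map $H^0(X, \mathcal O_X(\lfloor mP \rfloor)) \to H^0(C, \mathcal O_C(\lfloor mP \rfloor|_C))$ is surjective. Finally one checks that $\mathcal O_C(\lfloor mP \rfloor|_C)$ has a nonzero section, using Lemma~\ref{l_adjointcurves} when $g(C) \geq 1$ and direct Riemann--Roch when $C \simeq \mathbb P^1$; lifting such a section to $X$ produces a section of $\mathcal O_X(\lfloor mP \rfloor)$ not vanishing along $C$, contradicting $C \subseteq \Bs(P)$.

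The main obstacle is the rewriting step: one has to organise the computation so that $A'$ remains ample, $B'$ retains simple normal crossing support with coefficients in $[0,1)$, $\lfloor mP \rfloor - C$ is integral and $\mathbb Q$-linearly equivalent to $K_X + A' + B'$, and the restriction of the adjoint divisor to $C$ has positive degree so that Lemma~\ref{l_adjointcurves} applies. The features from Theorem~\ref{t_zariski} that make this bookkeeping possible are precisely the orthogonality $P \cdot N_i = 0$ and the negative definiteness of the intersection matrix of $N$; these properties fail in higher dimension, which is why the same strategy does not extend beyond surfaces.
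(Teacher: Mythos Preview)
Your overall architecture matches the paper's: reduce via Zariski decomposition to $R(X,P)$, assume a curve lies in $\Bs(P)$, and derive a contradiction from Kawamata--Viehweg vanishing plus a nonvanishing on the curve. But the heart of the argument --- the ``rewriting step'' you flag as the main obstacle --- is genuinely different from what you sketch, and your version does not go through as written.

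First, the paper does \emph{not} pick an arbitrary curve $C\subseteq\Bs(P)$ and then argue $C\subseteq\Supp N$. Your justification (``nef divisors have zero intersection with curves in their stable base locus'') would at best give $P\cdot C=0$, which does not force $C$ into $\Supp N$: there can be curves orthogonal to $P$ that are not components of the negative part. Instead the paper \emph{produces} the curve by a threshold construction: set $\lambda=\sup\{t\ge 0\mid \lfloor B+tP_k-N\rfloor\le 0\}$ (where $P_k=M_k+F_k$ comes from Corollary~\ref{c_mobilefix}), and take $S$ to be a component where the coefficient of $B+\lambda P_k-N$ hits~$1$. For $k$ large this forces $S\subseteq\Supp F_k\subseteq\Bs(P)$, and by design $S$ appears with coefficient exactly~$1$ in the boundary, which is what lets you peel it off.

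Second, the actual bookkeeping is not done with $\lfloor mP\rfloor - C$ but with $mP+R$, where $R=\sum_T(\mult_T\lceil N-B-\lambda P\rceil)\,T$ (sum over $T$ with $\mult_T(N-B-\lambda P)>0$) is an integral correction absorbing the negative contribution of $-N$. One then has $mP+R\sim_{\mathbb Q}K_X+A'+B'$ with $A'=A+(m-1-\lambda)P$ ample and $B'=B+\lambda P-N+R$ having coefficients in $(0,1]$ and $\lfloor B'\rfloor=\Sigma\ni S$. Vanishing gives $S\nsubseteq\bs|mP+R|$; then the chain $H^0(mP)\hookrightarrow H^0(mP+R)\hookrightarrow H^0(mP+mN)$, with the composite an isomorphism by the Zariski decomposition, forces $\bs|mP+R|=\bs|mP|\cup\Supp R$ and hence $S\nsubseteq\bs|mP|$. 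Your description (``$(m-1)M_k$ converts $N$ into a nef contribution'', and reliance on $P\cdot N_i=0$ to control $B'$) does not correspond to this mechanism; neither the orthogonality nor the negative-definiteness is used in the rewriting --- they are only used to know the Zariski decomposition exists.
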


\begin{proof}
We may assume that $\kappa(X,K_X+\Delta)\geq0$, since otherwise $R(X,K_X+\Delta)$ is trivial.
Let $K_X+\Delta=P+N$ be the Zariski decomposition of $K_X+\Delta$, whose existence is guaranteed by Theorem \ref{t_zariski}. Since $R(X,K_X+\Delta)\simeq R(X,P)$, we may assume that $P$ is not semiample, and in particular, $\Bs(P)$ contains a curve by Theorem \ref{t_baselocus}. Furthermore, we can assume that $P\geq0$.

Replacing $X$ by a log resolution, we may assume that the support of $B+P+N$ has simple normal crossings. For every positive integer $k$, let $P\sim_\mathbb Q M_k+F_k=P_k$ be the decomposition as in Corollary \ref{c_mobilefix}.
Note that $\rfdown B-N.\leq0$, and let
$$\lambda_k=\sup \{t\geq0\mid \rfdown B+tP_k-N.\le 0 \}.$$
Then $\lambda_k>0$, and if $\Sigma_k$ is the sum of all the prime divisors in  $B+\lambda_k P_k - N$ with coefficient equal to $1$, then the support of $\Sigma_k$ is contained in the support of $P_k$. Furthermore, by choosing $k$ large enough, we can assume that the support of $\Sigma_k$ is contained in $\Bs(P)$, and hence without loss of generality, we may assume that $P=P_k$. Denote $\Sigma=\Sigma_k$ and $\lambda=\lambda_k$.

Let
$$R=\sum _T (\mult_T \rcup N-B-\lambda P. )~ T,$$
where the sum is over all prime divisors $T$ such that $\mult_T (N-B-\lambda P) >0$, and denote $B'=B+\lambda P - N+R$. Then $R$ is an integral divisor, the coefficients of $B'$ lie in the interval $(0,1]$, and we have
$$0\le R \le \rcup N., \qquad\text{and} \qquad\rfdown B'.=\Sigma.$$

Fix a prime divisor $S$ in $\Supp\Sigma\subseteq\Bs(P)$.
Let $m>\lambda+1$ be a sufficiently large positive integer such that $mA$, $mB$, $mP$ and $mN$ are integral divisors and $\Bs(P)=\bs|mP|$.
Let $A'=A+(m-1 - \lambda)P$, and note that $A'$ is ample since $P$ is nef and $m-1 - \lambda>0$.
Then
$$m P + R\sim_\mathbb{Q}K_X+A+B-N+(m-1)P+R=K_X+A'+B',$$
and the Kawamata-Viehweg vanishing theorem implies $H^1(X,\mathcal O_X(mP+R-S))=0$.

We claim that
$$H^0(S,\mathcal O_S(mP+R))\neq0.$$
We first show that the claim implies the Theorem. The long cohomology sequence associated to the exact sequence
$$0\longrightarrow\mathcal O_X(mP+R-S)\longrightarrow\mathcal O_X(mP+R)\longrightarrow\mathcal O_S(mP+R)\longrightarrow0$$
yields that the map $H^0(X,\mathcal O_X(mP+R))\longrightarrow H^0(S,\mathcal O_S(mP+R))\neq0$ is surjective, and therefore $S\nsubseteq\bs|mP+R|$.
Since $0\leq R\leq\rcup N.\leq\rcup mN.=mN$, the composition of injective maps
$$H^0(X,\mathcal O_X(mP))\longrightarrow H^0(X,\mathcal O_X(mP+R))\longrightarrow H^0(X,\mathcal O_X(mP+mN))$$
is an isomorphism by the property of Zariski decompositions, hence so is the first map. But this implies $\bs|mP+R|=\bs|mP|\cup\Supp R$, and thus $S\nsubseteq\bs|mP|=\Bs(P)$,
a contradiction.

Finally, to prove the claim, let $g$ be the genus of $S$. Note that $S\nsubseteq\Supp R$ by construction,
and therefore $\deg(mP+R)_{|S}\ge 0$ since $P$ is nef.
In particular, this implies the claim for $g=0$. If $g\geq1$, note that
$$(m P + R)_{|S}\sim_\mathbb{Q}K_S+A'_{|S}+(B'-S)_{|S}$$
and $\deg(A'_{|S}+(B'-S)_{|S})>0$, hence the claim follows by Lemma \ref{l_adjointcurves}.
\end{proof} 
\section{Diophantine approximation and a lifting theorem}

Both Diophantine approximation and the Kawamata-Viehweg vanishing theorem play crucial roles
in recent developments in Mori theory. Diophantine approximation was first introduced in birational geometry,
as an essential tool, by Shokurov in \cite{Shokurov03}, in order to give a conceptual proof of the existence of certain
surgery operations called {\em flips\/}.

The aim of this section is to sketch how these two tools can be naturally combined to
obtain many of the results in \cite{CL10a}.\\

\paragraph{\bf Diophantine approximation}

We first recall Diophantine approximation, see for instance \cite[Lemma 3.7.7]{BCHM10}.

\begin{lemma}\label{l_diophant}
Let $\|\cdot\|$ be a norm on $\mathbb R^N$ and let $x\in \mathbb R^N$.
Fix a positive integer $k$ and a positive real number $\varepsilon$.

Then there are finitely many
points $x_i\in \mathbb R^N$ and positive integers $k_i$ divisible by $k$, such
that $k_i
x_i/k$ are integral, $\|x-x_i\|<\varepsilon/k_i$, and $x$
is a convex linear combination of $x_i$.
\end{lemma}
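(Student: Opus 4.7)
The plan is to combine Dirichlet's simultaneous Diophantine approximation theorem with a convex-hull argument. First, dispose of the easy case: if $x \in \mathbb{Q}^N$, write $x = m/\ell$ with $\ell \in \mathbb{Z}_{>0}$ and $m \in \mathbb{Z}^N$; then the single point $x_1 := x$ paired with $k_1 := k\ell$ satisfies all conditions trivially. So assume henceforth that $x \notin \mathbb{Q}^N$.

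Next, set $\delta := \varepsilon/(2k)$ and define the set of admissible rational approximations
\[
\mathcal{S} := \bigl\{\, m/\ell : \ell \in \mathbb{Z}_{>0},\ m \in \mathbb{Z}^N,\ \|\ell x - m\| < \delta \,\bigr\} \subset \mathbb{Q}^N.
\]
For every $y = m/\ell \in \mathcal{S}$, setting $k_y := k\ell$ yields a positive integer divisible by $k$ with $k_y y/k = m \in \mathbb{Z}^N$ and $\|x - y\| = \|\ell x - m\|/\ell < \delta/\ell = \varepsilon/(2 k_y)$. So each point of $\mathcal{S}$ is already an admissible vertex, and the remaining task is to select finitely many elements of $\mathcal{S}$ whose convex hull contains $x$. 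By Carathéodory's theorem, it is enough to show that $x \in \conv(\mathcal{S})$: this will give $x$ as a convex combination of at most $N+1$ elements of $\mathcal{S}$.

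To prove $x \in \conv(\mathcal{S})$, it is natural to argue the stronger statement that $x$ lies in the \emph{interior} of $\conv(\mathcal{S})$, by contradiction. If this were false, a separating hyperplane argument would produce a nonzero $v \in \mathbb{R}^N$ with $v \cdot (y - x) \le 0$ for all $y \in \mathcal{S}$. To derive a contradiction, apply Dirichlet to the shifted point $x + tv$ for a small $t > 0$ and a Dirichlet parameter $Q$: this yields $\ell \in \mathbb{Z}_{>0}$ and $m \in \mathbb{Z}^N$ with $\|\ell(x+tv) - m\|$ arbitrarily small, and setting $y := m/\ell$ one has
\[
\|\ell x - m\| \le \|\ell(x + tv) - m\| + \ell t \|v\|, \qquad y - x = tv + (m - \ell(x+tv))/\ell.
\]
A careful choice of $t$ and $Q$ forces $y \in \mathcal{S}$ while keeping $y - x$ close in direction to $tv$, whence $v \cdot (y - x) > 0$, contradicting the separation.

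The main obstacle is the balancing of parameters in this last step: the product $\ell t$ must be small enough that $y$ remains in $\mathcal{S}$, yet not so small that the shift $tv$ is drowned out by the Dirichlet error and the sign of $v \cdot (y-x)$ escapes control. Since $x$ is irrational, iterating Dirichlet produces infinitely many approximations with $\ell$ arbitrarily large and precision arbitrarily good, which supplies enough flexibility to push through this balancing and complete the proof.
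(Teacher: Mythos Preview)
The paper does not prove this lemma; it is merely quoted with a reference to \cite[Lemma 3.7.7]{BCHM10}. So there is no argument here to compare against, and I comment on your proposal on its own merits.

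Your outline has a genuine error at the ``stronger statement'' step: the claim that $x$ lies in the \emph{interior} of $\conv(\mathcal{S})$ is false in general. Take $N=2$, the sup norm, $x=(\sqrt{2},0)$, and any $\varepsilon,k$ with $\delta=\varepsilon/(2k)<1$. Every $y=m/\ell\in\mathcal{S}$ then has $|m_2|<\delta<1$, hence $m_2=0$ and $\mathcal{S}\subset\mathbb{R}\times\{0\}$; so $\conv(\mathcal{S})$ has empty interior in $\mathbb{R}^2$. The separating vector $v=(0,1)$ satisfies $v\cdot(y-x)=0$ for every $y\in\mathcal{S}$, and your perturbation $x+tv=(\sqrt{2},t)$ cannot manufacture a contradiction: any $y=m/\ell$ that lands in $\mathcal{S}$ is forced to have $m_2=0$, so $v\cdot(y-x)=0$, never strictly positive. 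The lemma is still true in this example---$x$ is visibly a convex combination of two points of $\mathcal{S}$ on the axis---but your route does not reach it. A correct argument along your lines must allow for $\conv(\mathcal{S})$ to span only a proper affine subspace through $x$ and induct on its dimension, or else work directly with the additive structure of the vectors $m-\ell x$ and the positivity constraint on $\ell$, which is closer to how the cited reference proceeds.

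Even setting this aside, your final paragraph is a promissory note rather than a proof. You correctly flag the balancing of $t$ against the Dirichlet parameters as the crux, and then assert that irrationality of $x$ ``supplies enough flexibility''. But you are applying Dirichlet to $x+tv$, not to $x$, so the irrationality hypothesis is on the wrong point; and you have not specified how $t$, the Dirichlet bound $Q$, and the resulting $\ell\le Q$ interact to guarantee simultaneously $\|\ell x-m\|<\delta$ and $v\cdot(y-x)>0$. This is where the real work lies, and it has not been done.
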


The previous Lemma implies the following characterization of rational polytopes in $\mathbb R^N$ (similar techniques
were used to prove  \cite[Theorem 4.4]{CL10a}).

\begin{proposition}\label{l_polytope}
Let $\|\cdot\|$ be a norm on $\mathbb R^N$ and  let $\mathcal P\subseteq \mathbb
R^N$ be a bounded convex subset.

Then $\mathcal{P}$ is a rational polytope if and only if there is a constant
$\varepsilon>0$ and a positive integer $k$ with the following property:
for every rational $v\in \mathbb R^N$, if there exist
$w\in\mathcal P$ and a positive integer $l$ such that $lv$ is integral and
$\|v-w\|<\varepsilon/lk$, then $v\in \mathcal{P}$.
\end{proposition}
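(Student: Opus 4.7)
For the forward direction, I would write the rational polytope as $\mathcal P=\{x\in\mathbb R^N:\langle a_i,x\rangle\leq b_i,\ i=1,\dots,m\}$ with $a_i\in\mathbb Z^N$ and $b_i\in\mathbb Z$ after clearing denominators, and set $C=\max_i\|a_i\|_*$ for the dual norm $\|\cdot\|_*$. Choosing $0<\varepsilon<1/C$ and $k=1$ suffices: for any rational $v$ with $lv\in\mathbb Z^N$ and $\|v-w\|<\varepsilon/(lk)$ for some $w\in\mathcal P$, the estimate $\langle a_i,v\rangle\leq b_i+C\varepsilon/l$, multiplied by $l$, yields the integer $\langle la_i,v\rangle-lb_i\leq C\varepsilon<1$, which therefore is $\leq 0$; hence $\langle a_i,v\rangle\leq b_i$ for every $i$ and $v\in\mathcal P$.

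For the backward direction, assume the property holds with constants $\varepsilon>0$ and $k\in\mathbb Z_{>0}$. My plan is to establish, in order, (a) $\mathcal P$ is closed, (b) every extreme point of $\mathcal P$ is rational, and (c) $\mathcal P$ has only finitely many extreme points; together with Krein--Milman (valid once $\mathcal P$ is compact, i.e., bounded and closed), these identify $\mathcal P$ with the convex hull of finitely many rational points, so $\mathcal P$ is a rational polytope. For (a), take $z\in\overline{\mathcal P}$, decompose $z=\sum\lambda_jy_j$ via Lemma \ref{l_diophant} with $k_jy_j/k\in\mathbb Z^N$ and $\|z-y_j\|<\varepsilon/k_j$, and observe that $z$ may be replaced in the estimate by some $w\in\mathcal P$ arbitrarily close to $z$ while preserving the strict inequality; the property (with $l=k_j/k$) then puts each $y_j$ in $\mathcal P$, so $z\in\mathcal P$. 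For (b), applying the same decomposition to an extreme point $x$ collapses by extremality to $x=y_j$ for each $j$ with $\lambda_j>0$, so $x$ is rational.

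Step (c) is the main obstacle. Assume for contradiction infinitely many extreme points; by compactness they accumulate at some $x^*\in\mathcal P$, and because extreme points lie on the relative boundary (a closed subset of $\mathcal P$), so does $x^*$. By (b) the rational extreme points affinely span the affine hull of $\mathcal P$, so that hull is a rational affine subspace of some dimension $d$. Working inside it, I would construct, via simultaneous Diophantine approximation, an affinely independent family $y_0,\dots,y_d$ of rational points close to $x^*$ and surrounding it, each satisfying $l_i\|y_i-x^*\|<\varepsilon/k$ where $l_i$ denotes the denominator of $y_i$, so that the property immediately places $y_i\in\mathcal P$. Their convex hull is then a $d$-dimensional simplex contained in $\mathcal P$ with $x^*$ in its relative interior, placing $x^*$ in $\operatorname{relint}\mathcal P$ and contradicting that $x^*$ lies on the relative boundary. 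Producing this ``surrounding'' family is the delicate point and requires a variant of Lemma \ref{l_diophant} that controls not only the denominators and distances but also the affine position of the approximants relative to $x^*$; this is standard, but demands careful bookkeeping.
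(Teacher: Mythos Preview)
Your forward direction and steps (a) and (b) of the backward direction are correct and essentially identical to the paper's argument. The genuine gap is in step (c).

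The ``variant'' of Lemma~\ref{l_diophant} you invoke---producing an affinely independent family $y_0,\dots,y_d$ with $x^*$ in the interior of their convex hull and $l_i\|y_i-x^*\|<\varepsilon/k$---is not standard, and in fact can fail outright. Nothing prevents the accumulation point $x^*$ from being rational with small denominator $q^*$ (the extreme points $v_j$ have denominators tending to infinity, but their limit need not); in that case every rational $y\neq x^*$ with $ly$ integral satisfies $l\|y-x^*\|_\infty\geq 1/q^*$, so once $q^*\leq k/\varepsilon$ the inequality you need is impossible for any $y_i\neq x^*$. Even for irrational $x^*$, Lemma~\ref{l_diophant} only guarantees that $x^*$ lies in the convex hull of the approximants, not that they are affinely independent (for $x^*=(\sqrt2,\sqrt2)$ the natural approximants may all sit on the diagonal). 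Switching the witness from $w=x^*$ to other points of $\mathcal P$ does not help either: those witnesses would themselves have to surround $x^*$ inside $\mathcal P$, which is precisely what is at issue when $x^*$ lies on the relative boundary.

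The paper sidesteps this entirely with a one-dimensional overshoot that never needs a full simplex. After fixing a single rational $v'_\infty\in\mathcal P$ close to $v_\infty$ (via Lemma~\ref{l_diophant} and the hypothesis), choose $j$ with $\|v_j-v'_\infty\|<\varepsilon/m$. Since $v_j$ is rational by your step (b), one can pick $m'\gg0$ divisible by $k$ with $(m+m')v_j/k$ integral, and set
\[
v_0=\frac{m+m'}{m'}\,v_j-\frac{m}{m'}\,v'_\infty.
\]
Then $m'v_0/k\in\mathbb Z^N$ and $\|v_0-v_j\|=\frac{m}{m'}\|v_j-v'_\infty\|<\varepsilon/m'$, so the hypothesis (with witness $w=v_j\in\mathcal P$) forces $v_0\in\mathcal P$. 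But now $v_j=\frac{m'}{m+m'}v_0+\frac{m}{m+m'}v'_\infty$ is a proper convex combination of two points of $\mathcal P$, contradicting extremality of $v_j$. The only Diophantine input here is choosing $m'$ large and in the right residue class---no control of affine position is required.
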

\begin{proof}
Since any two norms on $\mathbb R^N$ are equivalent, we can assume that $\|\cdot\|$
is the standard Euclidean norm. Let $\langle\cdot\,,\cdot\rangle$ be the standard scalar product.

Suppose that $\mathcal{P}$ is a rational polytope. Then there exist finitely many
$c_i\in\mathbb Z$  and $\psi_i\in \mathbb Z^N$ such that
$w\in \mathcal{P}$ if and only if $\langle \psi_i,w\rangle\geq c_i$ for every
$i$. Pick $\varepsilon>0$ such that
$\|\psi_i\|<1/\varepsilon$ for all $i$ and let $k$ be any positive integer.

Assume that for $v\in \mathbb R^N$ there exist $w\in\mathcal{P}$
and a positive integer $l$ such that $lv$ is integral and $\|v-w\|<\varepsilon/lk$.
Then by the Cauchy-Schwarz inequality we have
$$
c_i-\langle \psi_i,v\rangle \le \langle\psi_i, w\rangle-\langle \psi_i,v\rangle=
\langle\psi_i, w-v\rangle\leq\|\psi_i\|\|w-v\|< \|\psi_i\|
\varepsilon/lk < 1/lk
$$
for any $i$. But $lk(c_i-\langle \psi_i,v\rangle)=lkc_i-\langle \psi_i,klv\rangle$ is an
integer, and so $\langle \psi_i,v\rangle\geq c_i$.  Therefore
$v\in \mathcal{P}$.

Assume now that there exists $\varepsilon$ and $k$ as in the statement of the proposition, and let  $w\in
\overline{\mathcal{P}}$.  By Lemma \ref{l_diophant}, there
exist points $w_i\in \mathbb R^N$ and positive integers $m_i$
divisible by $k$, such that $w$ is a convex linear combination of $w_i$ and
$$
\|w-w_i\|<\varepsilon/m_i\qquad\text{and}\qquad m_iw_i/k \text{ is integral}
$$
for every $i$. In particular, there exists $w'\in \mathcal P$ such that
$\|w'-w_i\|<\varepsilon/m_i$.
By the assumption, it follows that $w_i\in \mathcal{P}$ for all $i$, and in particular $w\in \mathcal P$.
Therefore, $\mathcal P$ is a closed set, and moreover, every extreme point of $\mathcal{P}$ is rational.

If $\mathcal P$ is not a rational polytope then there  exist infinitely many extreme points $v_i$ of
$\mathcal{P}$, with $i\in \mathbb N$. Since $\mathcal{P}$ is compact,
by passing to a subsequence
we obtain that there exist $v_\infty \in \mathcal P$
such that
$$v_\infty=\lim\limits_{i\rightarrow\infty}
v_i.$$
By Lemma \ref{l_diophant}, there exists a
positive integer $m$ divisible by $k$ and
$v'_\infty \in \mathbb R^N$ such that $mv'_\infty /k$ is integral and
$\|v_\infty-v'_\infty\|<\varepsilon/m$. By assumption, it follows that
$v'_\infty\in \mathcal P$. Pick $j\gg0$ so that
$$\|v_j-v'_\infty\|\le \|v_j-v_\infty\| + \|v_\infty-v'_\infty\|
<\frac \varepsilon m.$$
Therefore, there exists a positive integer $m'\gg0$ divisible by $k$ such that
$(m+m')v_j/k$ is integral, and such that if we define
$$v_0=\frac {m+m'}{m'}v_j-\frac m {m'}v'_\infty\in v'_\infty+\mathbb
R_+(v_j-v'_\infty),$$
then $m'v_0/k$ is
integral and
$$\|v_0-v_j\|=\frac m {m'}\|v_j-v'_\infty\|< \frac \varepsilon
{m'}.$$
By assumption, this implies that $v_0\in \mathcal{P}$, and since
$v_j=\frac{m'}{m+m'}v_0+\frac{m}{m+m'}v'_\infty$, it follows
that $v_j$ is not an extreme point of $\mathcal P$, a contradiction.

Thus, $\mathcal{P}$ is a rational polytope.
\end{proof}

\paragraph{\bf Lifting property}

Let $X$ be a smooth projective variety, let $S$ be a prime divisor, let $A$ be an ample $\mathbb Q$-divisor, and let $B\geq0$ be a $\mathbb Q$-divisor such that $S\nsubseteq\Supp B$, $\lfloor B\rfloor=0$, and $\Supp(S+B)$ has simple normal crossings.
Let $C\ge 0$ be a $\mathbb Q$-divisor on $S$ such that $C\le B_{|S}$, and let $m$ be a positive integer such that $mA$, $mB$ and $mC$ are integral.

We say that $(B,C)$ satisfies the {\em lifting property} $\mathfrak L_m$ if the image of the restriction morphism
$$H^0\big(X,\mathcal O_X(m(K_X+S+A+B))\big)\longrightarrow H^0\big(S,\mathcal O_S(m(K_S+A_{|S}+B_{|S}))\big)$$
contains $H^0\big(S,\mathcal O_S(m(K_S+A_{|S}+C))\big)\cdot\sigma$, where $\sigma$ is a global section of $\mathcal O_S(m(B_{|S}-C))$ vanishing along $m(B_{|S}-C)$.\\

The following theorem is \cite[Theorem 3.4]{CL10a}, and it is a slight generalization of the lifting  theorem by Hacon-M\textsuperscript cKernan \cite{HM10}, which is itself a generalization of  results by Siu \cite{Siu98,Siu02} and  Kawamata \cite{Kawamata99}. Similar results were also obtained in \cite{Takayama06}, \cite{Paun05} and
\cite{EP09}.

\begin{theorem}\label{t_lifting}
Let $X$ be a smooth projective variety, let $S$ be a prime divisor, let $A$ be an ample $\mathbb Q$-divisor, and let $B\geq0$ be a $\mathbb Q$-divisor such that $S\nsubseteq\Supp B$, $\lfloor B\rfloor=0$, and $\Supp(S+B)$ has simple normal crossings.
Let $C\ge 0$ be a $\mathbb Q$-divisor on $S$ such that $(S,C)$ is canonical, and let $m$ be a
positive integer such that
$mA$, $mB$ and $mC$ are integral.

Assume that there exists a positive integer $q\gg 0$ such
that $qA$ is very ample,
$S\not\subseteq\base|qm(K_X+S+A+B+\frac1m A)|$ and
$$C \le B_{|S}- B_{|S}\wedge \frac
1 {qm} \fix |qm(K_X+S+A+B+\textstyle\frac1m A)|_S.$$
Then $(B,C)$ satisfies $\mathfrak{L}_m$.
\end{theorem}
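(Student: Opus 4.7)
The plan is to follow the approach of Hacon-M\textsuperscript{c}Kernan. First, I would pass to a log resolution $\pi: Y \to X$ of the linear system $|D|$, where $D := qm(K_X+S+A+B+\tfrac{1}{m}A)$, so that $\pi^*D = M + F$ with $|M|$ base-point free and $F \geq 0$ the fixed divisor, and the strict transform $\widetilde S := \pi_*^{-1}S$ is not contained in $\Supp F$. By Bertini, a general $N \in |M|$ is reduced with $\widetilde S \not\subseteq \Supp N$. Set $\Psi := (N+F)/q$, an effective $\mathbb Q$-divisor with $\Psi \sim_\mathbb Q \pi^*(L+A)$, where $L := m(K_X+S+A+B)$, and $\widetilde S \not\subseteq \Supp \Psi$.

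Using the identity $L - S = K_X + (m-1)(K_X+S+A+B) + A + B$ together with $\tfrac{m-1}{m}\pi^*L \sim_\mathbb Q \tfrac{m-1}{m}(\Psi - \pi^*A)$, a direct computation yields
\begin{equation*}
\pi^*L - \widetilde S \sim_\mathbb Q K_Y + \tfrac{m-1}{m}\Psi + \pi^*B - K_{Y/X} + \sum e_i E_i + \tfrac{1}{m}\pi^*A,
\end{equation*}
where $\pi^*S = \widetilde S + \sum e_i E_i$. The next step is to decompose $\Theta := \tfrac{m-1}{m}\Psi + \pi^*B - K_{Y/X} + \sum e_i E_i$ as $\Theta = B'' + E$, where $B''$ is SNC with coefficients in $[0,1]$, $\widetilde S \not\subseteq \Supp B''$, and $E \geq 0$ is integral; the canonical singularities hypothesis on $(S,C)$ ensures the $-K_{Y/X}$ contribution along each $\pi$-exceptional divisor is absorbed by the matching positive contributions from $\Psi$ and $\sum e_i E_i$, so the decomposition stays effective. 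Then $\pi^*L - \widetilde S - E \sim_\mathbb Q K_Y + B'' + \tfrac{1}{m}\pi^*A$, and after a tiny perturbation so that the nef class $\tfrac{1}{m}\pi^*A$ becomes ample on $Y$, the Kawamata-Viehweg vanishing theorem gives $H^1(Y, \mathcal O_Y(\pi^*L - \widetilde S - E)) = 0$. The long exact sequence of
\begin{equation*}
0 \to \mathcal O_Y(\pi^*L - \widetilde S - E) \to \mathcal O_Y(\pi^*L - E) \to \mathcal O_{\widetilde S}\bigl((\pi^*L-E)_{|\widetilde S}\bigr) \to 0
\end{equation*}
then yields a surjection on $H^0$, and combined with the embedding $H^0(Y, \pi^*L - E)\hookrightarrow H^0(X, L)$ (from $\pi_*\mathcal O_Y = \mathcal O_X$), every section of $(\pi^*L-E)_{|\widetilde S}$ lifts to a section of $\mathcal O_X(L)$.

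The hardest step, and the heart of the theorem, is the final matching: one must show that for every $\tau \in H^0(S, \mathcal O_S(m(K_S+A_{|S}+C)))$ the product $\tau\cdot\sigma$, pulled back to $\widetilde S$, vanishes along $E_{|\widetilde S}$ and therefore lies in the subspace that gets lifted. This is precisely where the hypothesis $C \leq B_{|S} - B_{|S}\wedge \tfrac{1}{qm}\fix|D|_S$ is used. Unpacking the construction of $E$, the contributions to $E_{|\widetilde S}$ along prime divisors of $\widetilde S$ come from rounding down $\tfrac{m-1}{m}\Psi + \pi^*B$ along non-exceptional components; since $N$ is general its restriction to $\widetilde S$ produces no integer jumps, so only the fixed part $F$ matters, and its restriction is controlled by $m \cdot (B_{|S}\wedge \tfrac{1}{qm}\fix|D|_S)$. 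The hypothesis converts this into $E_{|\widetilde S} \leq m(B_{|S}-C)$, which is exactly the vanishing of $\sigma$, completing the lift and establishing $(B,C) \in \mathfrak{L}_m$.
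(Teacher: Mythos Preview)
The paper does not actually prove this theorem: immediately after the statement it says ``We omit the proof of the Theorem, but we emphasise that it is a direct consequence of the Kawamata-Viehweg vanishing plus some elementary arithmetic,'' and refers to \cite{CL10a} and to Hacon--M\textsuperscript{c}Kernan \cite{HM10} for the argument. So there is no in-paper proof to compare against; what one can say is that your sketch follows exactly the Hacon--M\textsuperscript{c}Kernan strategy the authors have in mind, and the main computation $\pi^*L-\widetilde S\sim_{\mathbb Q}K_Y+\tfrac{m-1}{m}\Psi+\pi^*B-K_{Y/X}+\sum e_iE_i+\tfrac1m\pi^*A$ is correct.

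Two points where your write-up is imprecise, though not fatally so. First, the canonical hypothesis on $(S,C)$ is not used to make the decomposition $\Theta=B''+E$ effective on $Y$; on $Y$ one simply separates integer parts. The canonicity enters only after restricting to $\widetilde S$: the map $\widetilde S\to S$ is a log resolution of $(S,C)$, and the non-negativity of the discrepancies is what guarantees that along the \emph{exceptional} divisors of $\widetilde S\to S$ the pullback of $\tau\cdot\sigma$ already vanishes to order at least $\mult E_{|\widetilde S}$. Your sentence ``the canonical singularities hypothesis on $(S,C)$ ensures the $-K_{Y/X}$ contribution \dots is absorbed'' conflates the ambient and restricted pictures. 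Second, $\tfrac1m\pi^*A$ is big and nef on $Y$, not ample, so rather than ``a tiny perturbation'' one either invokes Kawamata--Viehweg in its big-and-nef form directly, or absorbs a small multiple of an exceptional divisor into $B''$; as written, a perturbation of a pullback cannot be made ample on $Y$. With these two clarifications your outline is the standard proof.
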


We omit the proof of the Theorem, but we emphasise that it is a direct consequence of the Kawamata-Viehweg vanishing plus some elementary arithmetic.

Before we show how the lifting theorem and Diophantine approximation are related to each other, it is useful to spend a few words on the assumptions of the theorem. It is well known that, in general, the lifting theorem does not hold if
we just take $C=B_{|S}$.  A simple counterexample is given by considering the blow-up of $\mathbb P^2$ at one point, see \cite{H05,CKL11} for more details. On the other hand, the condition that $(S,C)$ is canonical is a bit more subtle and it might look artificial, but Example \ref{e_canonical} shows that it is essential. Clearly if $X$ is a surface, this condition is guaranteed by the fact that $X$ is smooth, $\Supp(S+B)$ has simple normal crossings and $\rfdown B.=0$.

We now proceed with

\begin{theorem}\label{t_rational_polytope}
Let $X$ be a smooth projective variety, and let $S,S_1,\dots,S_p$ be distinct prime divisors such that $S+\sum_{i=1}^p S_i$ has simple normal crossings. Let $V\subseteq \Div_{\mathbb R}(X)$ be the subspace spanned by $S_1, \dots, S_p$, and let $W\subseteq \Div_{\mathbb R}(S)$ be the subspace spanned by all the components of $S_{i|S}$. Let  $A$ be an ample $\mathbb Q$-divisor on $X$.
Let $\mathcal Q'$ be the set of  pairs  of rational divisors
$$(B,C) \in \mathcal B_A^S(V)\times \mathcal E_{A_{|S}}(W)$$
such that
 $$ C\le B \quad \text{and} \quad (B,C) \text{ satisfies }\mathfrak L_m \text{ for infinitely many } m,$$
and let $\mathcal Q$ be the intersection of the closure of $\mathcal Q'$ and the convex hull of $\mathcal Q'$. 

Then $\mathcal Q$ is a finite union of rational polytopes.
\end{theorem}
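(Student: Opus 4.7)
I would verify, on each convex piece of $\mathcal Q$, the rational approximation criterion of Proposition~\ref{l_polytope}, using Theorem~\ref{t_lifting} and Lemma~\ref{l_diophant} as the two main technical tools. The set $\mathcal Q$ is bounded since it is contained in the compact polytope $\mathcal L(V)\times\mathcal L(W)$, but it is not automatically convex: $\conv(\mathcal Q')$ is convex while $\overline{\mathcal Q'}$ need not be. My first step is therefore to decompose $\mathcal Q$ into finitely many closed convex pieces, for example by stratifying according to which walls of $\mathcal L(V)\times\mathcal L(W)$ contain the limiting $(B,C)$, or by invoking the rational polyhedral structure of $\mathcal B_A^S(V)$ and $\mathcal E_{A_{|S}}(W)$ proved in parallel in \cite{CL10a}. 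It then suffices to show that each such convex piece $\mathcal Q_0$ is a rational polytope.

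The core argument is as follows. I would choose $\varepsilon>0$ small and $k\in\mathbb Z_{>0}$ large (both depending only on $A$, $S$, $V$, $W$), and show: given any rational $v = (B,C)$ with $lv$ integral for some $l\geq 1$, and any $w = (B_0,C_0)\in\mathcal Q_0$ with $\|v-w\|<\varepsilon/(lk)$, one has $v\in\mathcal Q'$. This immediately implies $v\in\overline{\mathcal Q'}\cap\conv(\mathcal Q')=\mathcal Q$, after which Proposition~\ref{l_polytope} delivers the conclusion. Since $(B_0,C_0)\in\overline{\mathcal Q'}$, I first approximate it by a rational point $(B',C')\in\mathcal Q'$ so close to $(B_0,C_0)$ that $v$ is also close to $(B',C')$; then $(B',C')$ satisfies $\mathfrak L_{m'}$ for infinitely many $m'$, which gives explicit control on the fixed part of $|qm'(K_X+S+A+B'+\tfrac1{m'}A)|_S$.

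To prove $v\in\mathcal Q'$, I would apply Theorem~\ref{t_lifting} to the rational pair $(B,C)$ for $m$ ranging over a sufficiently divisible arithmetic progression of multiples of $l$, with $q$ chosen so that $qA$ is very ample. The side conditions $B\in\mathcal B_A^S(V)$, $C\in\mathcal E_{A_{|S}}(W)$, $C\le B$ and canonicity of $(S,C)$ all follow from closeness to $(B_0,C_0)$ provided $\varepsilon/k$ is small compared with the positive coefficients appearing in $B_0$ and $C_0$. The decisive hypothesis
$$C \;\leq\; B_{|S} \;-\; B_{|S}\wedge \tfrac{1}{qm}\fix\bigl|qm(K_X+S+A+B+\tfrac1m A)\bigr|_S$$
must be deduced from the corresponding bound for $(B',C')$. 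Lemma~\ref{l_diophant} lets me express $v$ as a convex combination of rational points arbitrarily close to $v$ with integral denominators divisible by prescribed integers, so that the $\sigma_\Gamma$-invariants of the relevant adjoint classes at $v$ differ from those at $(B',C')$ by $O(\varepsilon/(lk))$; Lemma~\ref{d_sigma} (continuity on the big cone and lower semi-continuity in general) then propagates the fixed-part bound from $(B',C')$ to $(B,C)$, with the error absorbed once $\varepsilon$ and $k$ are calibrated. Theorem~\ref{t_lifting} then produces $\mathfrak L_m$ for $(B,C)$ for all $m$ in the chosen progression, so $(B,C)\in\mathcal Q'$.

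\emph{Main obstacle.} The hardest step is precisely this fixed-part transfer: stable base loci are only lower semi-continuous in the divisor class, so the bound certified by $\mathfrak L_{m'}$ at $(B',C')$ does not persist automatically under perturbation. Calibrating the constants $\varepsilon$, $k$, $q$, and $m$ so that the perturbation error is genuinely absorbed into the tolerance of Theorem~\ref{t_lifting} — uniformly in the denominator $l$ — is where the interaction between Diophantine approximation and the lifting theorem really does the work, and is the principal technical point of the proof.
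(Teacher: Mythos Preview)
Your overall strategy coincides with the paper's: apply the rational--approximation criterion of Proposition~\ref{l_polytope} on convex pieces of $\mathcal Q$, and use Theorem~\ref{t_lifting} to show that any rational $(B,C)$ sufficiently close to a point of $\mathcal Q$ already satisfies $\mathfrak L_m$ for infinitely many $m$, hence lies in $\mathcal Q'\subseteq\mathcal Q$. At the level of a proof plan you are aligned with the paper.

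The one substantive divergence is in the fixed--part transfer, and here your route has a directional problem. The paper does \emph{not} pass through the asymptotic invariants $\sigma_\Gamma$. Its key observation is that $\mathfrak L_{m}$ at $(B',C')$ immediately gives the explicit inequality
\[
\fix|m(K_X+S+A+B')|_S \;\le\; \fix|m(K_S+(A+C')_{|S})| \;+\; m(B'_{|S}-C'),
\]
an \emph{upper} bound on the restricted fixed part whose right--hand side lives on $S$ and is controlled by the inductive hypothesis in dimension $\dim X-1$. A short divisor computation then plugs this into the hypothesis of Theorem~\ref{t_lifting} at the nearby $(B,C)$. By contrast, you propose to use continuity and lower semi--continuity of $\sigma_\Gamma$ from Lemma~\ref{d_sigma}; but $\sigma_\Gamma(D)$ is a \emph{lower} bound for $\tfrac{1}{qm}\mult_\Gamma\fix|qmD|$, so knowing that $\sigma_\Gamma$ is small near $(B',C')$ does not by itself bound from above the fixed part of the specific linear system $|qm(K_X+S+A+B+\tfrac1m A)|_S$ required in Theorem~\ref{t_lifting}. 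The displayed inequality is precisely what bridges $\mathfrak L_m$ back to that hypothesis without any appeal to asymptotics, and it is the point your sketch is missing.

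A smaller issue: your claim that canonicity of $(S,C)$ follows from closeness to $(B_0,C_0)$ is not justified. A simple normal crossings boundary with coefficients in $[0,1)$ is klt but not canonical in general (this is exactly the content of Example~\ref{e_canonical}), and nothing in the definition of $\mathcal Q'$ forces $(S,C_0)$ to be canonical. In the detailed argument of \cite{CL10a} this is handled separately; you should flag it rather than absorb it into ``closeness''.
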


Note that, since the aim is to provide a proof of Theorem \ref{t_cox} by induction on the dimension of $X$, in this context   we are assuming that Theorem \ref{t_cox} holds in dimension $\dim X-1$. In particular,  we may assume that $\mathcal E_{A_{|S}}(W)$ is a rational polytope, see \cite[Theorem 5.5]{CL10a}.

We now explain briefly how  Proposition \ref{l_polytope} can be applied to get Theorem \ref{t_rational_polytope}. First note that the property $\mathfrak L_m$ for the pair $(B,C)$ immediately implies that
$$\fix |m(K_S+(A+C)_{|S})|+m(B_{|S}-C) \ge \fix|m(K_X+S+A+B)|_S. $$

Thus, after doing some simple algebra of divisors,  the lifting result in Theorem \ref{t_lifting} can be rephrased by saying that if the property $\mathfrak L_m$ holds for a pair $(B',C')$ sufficiently ``close'' to $(B,C)$, then also the pair $(B,C)$ will satisfy the property $\mathfrak L_m$. Here the distance between $(B,C)$ and $(B',C')$ is bounded in terms of a positive integer $q$ such that $qB$ and $qC$ are integral.  Thus, we can apply Proposition \ref{l_polytope} to get the desired result.

Theorem \ref{t_rational_polytope} implies two crucial results which are related to Theorem \ref{t_cox}.

\begin{theorem}\label{t_consequences}
Under the assumption of Theorem \ref{t_rational_polytope}, the set $\mathcal B_A^S(V)$ is a rational polytope and, for any $B_1,\dots,B_k\in \mathcal L(V)$, the ring
$$\rest_S R(X;K_X+S+A+B_1,\dots,K_X+S+A+B_k)$$
is finitely generated.
\end{theorem}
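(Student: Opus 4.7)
The plan is to derive both assertions from Theorem \ref{t_rational_polytope} by analysing the projection $\pi_1\colon V\times W\to V$ onto the first factor, with the lifting theorem providing the bridge between $X$ and $S$.

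First I would prove that $\mathcal B_A^S(V)=\pi_1(\mathcal Q)$. The containment $\pi_1(\mathcal Q')\subseteq\mathcal B_A^S(V)$ is immediate from the definition of $\mathcal Q'$, and since $\mathcal B_A^S(V)$ is convex and closed (the property $S\nsubseteq\Bs(K_X+S+A+B)$ is preserved under convex combinations and under limits coming from approximating big classes, using Lemma \ref{d_sigma}), it also contains the closure and convex hull, giving one inclusion. For the reverse inclusion, pick any rational $B\in\mathcal B_A^S(V)$. For $q,m$ large with $qA$ very ample, the hypothesis $S\nsubseteq\Bs(K_X+S+A+B)$ together with the ampleness of $\tfrac{1}{m}A$ ensures $S\nsubseteq\bs|qm(K_X+S+A+B+\tfrac{1}{m}A)|$. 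Setting
$$C=B_{|S}-B_{|S}\wedge\frac{1}{qm}\fix|qm(K_X+S+A+B+\tfrac{1}{m}A)|_S,$$
the pair $(S,C)$ is canonical since $S$ is smooth and $C$ has simple normal crossing support with coefficients in $[0,1)$, so Theorem \ref{t_lifting} yields $\mathfrak L_m$ for $(B,C)$ for infinitely many $m$. Restriction to $S$ provides an effective representative of $K_S+A_{|S}+C$, so $C\in\mathcal E_{A_{|S}}(W)$, and $(B,C)\in\mathcal Q'$. Irrational $B\in\mathcal B_A^S(V)$ are handled by Lemma \ref{l_diophant}: approximate $B$ by rational points in $\mathcal B_A^S(V)$ and take convex combinations of the resulting pairs, landing in $\mathcal Q$. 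Hence $\pi_1(\mathcal Q)=\mathcal B_A^S(V)$, so $\mathcal B_A^S(V)$ is a finite union of rational polytopes; being also convex, it is a single rational polytope.

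For the finite generation, let $\mathcal P=\conv(B_1,\dots,B_k)\cap\mathcal B_A^S(V)$, which is a rational polytope since both sets are. By Theorem \ref{t_rational_polytope}, the preimage $\pi_1^{-1}(\mathcal P)\cap\mathcal Q$ is a finite union of rational polytopes, and its vertices give us finitely many rational pairs $(B^{(j)},C^{(j)})_{j=1}^N$ whose $B$-components include all $B_i$ and whose $C$-components lie in $\mathcal E_{A_{|S}}(W)$. By the inductive hypothesis, Theorem \ref{t_cox} applied in dimension $\dim X-1$ on $S$ gives that the adjoint ring
$$R\bigl(S;K_S+A_{|S}+C^{(1)},\dots,K_S+A_{|S}+C^{(N)}\bigr)$$
is finitely generated. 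The lifting property $\mathfrak L_m$ at each vertex $(B^{(j)},C^{(j)})$, for infinitely many $m$, tells us that sections of $m(K_S+A_{|S}+C^{(j)})$, multiplied by the canonical section $\sigma_j$ of $m(B^{(j)}_{|S}-C^{(j)})$, lie in the image of the restriction $H^0(X,m(K_X+S+A+B^{(j)}))\to H^0(S,m(K_S+A_{|S}+B^{(j)}_{|S}))$.

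The main obstacle, and the last step of the proof, is to assemble the generators on $S$ into generators of the restricted ring $\rest_S R(X;K_X+S+A+B_1,\dots,K_X+S+A+B_k)$. The idea is that any element of the restricted ring of multidegree $(m_1,\dots,m_k)$ corresponds, after normalising by $m=\sum m_i$, to a restriction of a section of $m(K_X+S+A+B)$ with $B=\sum\tfrac{m_i}{m}B_i\in\mathcal P$. Writing $B$ as a rational convex combination of the vertex $B^{(j)}$ supported in a single polytope of the decomposition, the corresponding section on $S$ factors (up to bounded ambiguity) as a product of the fixed sections $\sigma_j$ and a section of a suitable sum of the $K_S+A_{|S}+C^{(j)}$, which lies in the finitely generated $S$-ring. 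One then multiplies back by the finitely many $\sigma_j$'s to obtain a finite generating set for the restricted ring. The care needed is in handling the common refinement of the polyhedral decompositions of $\mathcal P$ and $\mathcal Q$ and in ensuring that the combinatorial ambiguity in writing $B$ in terms of vertices only costs finitely many generators; this is exactly where the fact that $\mathcal Q$ is a \emph{finite} union of rational polytopes, rather than merely closed, is used crucially.
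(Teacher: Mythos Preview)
Your overall strategy matches the paper's outline: show $\mathcal B_A^S(V)=\pi_1(\mathcal Q)$ and use convexity for the first claim, then use the polyhedral structure of $\mathcal Q$ together with induction on $S$ for the second. However, there is a genuine gap in your execution of the key step.

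The claim ``the pair $(S,C)$ is canonical since $S$ is smooth and $C$ has simple normal crossing support with coefficients in $[0,1)$'' is false as soon as $\dim S\geq 2$. A smooth pair with simple normal crossings boundary and $\lfloor C\rfloor=0$ is always klt, but it is canonical only when, at every stratum, the sum of the coefficients of the components through that stratum is at most its codimension minus one; for instance, two components with coefficients summing to more than $1$ already violate this after blowing up their intersection. Example~\ref{e_canonical} in this very paper is built precisely around this phenomenon: there $S\simeq\mathbb P^3$, $B_{|S}$ has simple normal crossings support with all coefficients $8/9$, yet $(S,B_{|S})$ is not canonical. Since your $C$ is chosen as $B_{|S}$ minus a fixed-part correction, nothing prevents it from having the same defect, so Theorem~\ref{t_lifting} cannot be invoked as stated. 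In the full argument of \cite{CL10a} this is handled with additional care, not by the blanket implication you assert.

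Two smaller points. First, you assert that $\mathcal B_A^S(V)$ is closed in order to get $\pi_1(\mathcal Q)\subseteq\mathcal B_A^S(V)$, but closedness is exactly one of the consequences being proved; fortunately you only need convexity here, since $\mathcal Q\subseteq\conv(\mathcal Q')$ already gives $\pi_1(\mathcal Q)\subseteq\conv\pi_1(\mathcal Q')\subseteq\mathcal B_A^S(V)$. Second, the sentence ``Restriction to $S$ provides an effective representative of $K_S+A_{|S}+C$'' does not follow: restriction gives an effective divisor $\mathbb R$-equivalent to $K_S+A_{|S}+B_{|S}$, and since $C\leq B_{|S}$ this does not directly yield $C\in\mathcal E_{A_{|S}}(W)$. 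This point also requires a separate argument.
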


To prove the first statement, it is sufficient to show that $\mathcal B_A^S(V)$ is the image of the set defined in Theorem \ref{t_rational_polytope} through the  first projection, and the result follows immediately by the convexity of $\mathcal B_A^S(V)$. Note that, in particular, $\mathcal B_A^S(V)$ is compact, which is one of the main ingredients in our proofs of several results in \cite{CL10a}. 

The second statement is more delicate. Theorem \ref{t_rational_polytope} implies that the restricted algebra is spanned by a finite union of adjoint rings on $S$. Thus, the result  follows  by induction on the dimension.
\section{Finite generation}

In this section we present the proof of a  special case of the finite generation theorem which already contains almost all fundamental problems of the general case, and it is
particularly easy to picture what is going on. We prove the following:

\begin{theorem}\label{t_fingen}
Let $X$ be a smooth projective variety, and let $S_1$ and $S_2$ be distinct prime divisors such that $S_1+S_2$ has simple normal crossings. Let $B=b_1S_1+b_2S_2$ be a $\mathbb Q$-divisor such that $0\le b_1,b_2<1$, and let $A$ be an ample $\mathbb Q$-divisor. Assume that $K_X+A+B\sim_\mathbb Q D$ for some $D\in \mathbb Q_+ S_1+ \mathbb Q_+ S_2$.

Then the ring $R(X,K_X+A+B)$ is finitely generated.
\end{theorem}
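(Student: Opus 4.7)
The plan is to leverage the induction hypothesis that Theorem~\ref{t_cox} holds in dimension $\dim X-1$, together with Theorem~\ref{t_consequences} and the lifting theorem (Theorem~\ref{t_lifting}), to reduce the finite generation of $R(X,K_X+A+B)$ to a restriction problem on $S_1$. As a first step I perturb $B$ by a rational multiple of $D$: because $K_X+A+(B+\epsilon D)\sim_{\mathbb Q}(1+\epsilon)D$, the ring $R(X,K_X+A+B)$ differs from $R(X,K_X+A+B+\epsilon D)$ only by passage to a Veronese, so finite generation is equivalent. Choosing $\epsilon=-\min_i(b_i/d_i)$ over $i$ with $d_i>0$, I may assume $b_1=0$. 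The degenerate cases $D=0$ (giving a trivial polynomial ring) and $d_1d_2=0$ (the single-divisor analogue handled by the same technique with one restriction) are separated off, so the principal case is $b_1=0$, $b_2\in[0,1)$, $d_1,d_2>0$.

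Since $K_X+A+B\sim_{\mathbb Q}D\ge 0$ with $\Supp D\subseteq S_1\cup S_2$, the stable base locus satisfies $\Bs(K_X+A+B)\subseteq S_1\cup S_2$. If this locus has no divisorial component a basepoint-free argument shows $K_X+A+B$ is semiample and the ring is immediately finitely generated; otherwise WLOG $S_1\subseteq\Bs(K_X+A+B)$. I set $V:=\mathbb R\cdot S_2\subset\Div_{\mathbb R}(X)$ and take $S_1$ as the distinguished prime divisor in the sense of Theorem~\ref{t_rational_polytope} (it is distinct from the spanning set of $V$). The induction hypothesis together with Theorem~\ref{t_consequences} then gives that the polytope $\mathcal B_A^{S_1}(V)$ is rational and that the restricted ring
$$\rest_{S_1}R\bigl(X;K_X+S_1+A+c_1S_2,\dots,K_X+S_1+A+c_kS_2\bigr)$$
is finitely generated for any rational $c_1,\dots,c_k\in[0,1]$.

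To upgrade this restricted finite generation to finite generation on $X$, I use the identity $K_X+A+b_2S_2=(K_X+S_1+A+b_2S_2)-S_1$. It identifies $H^0(X,\ring X.(m(K_X+A+B)))$ with the subspace of $H^0(X,\ring X.(m(K_X+S_1+A+b_2S_2)))$ vanishing to order $\ge m$ along $S_1$, exhibiting $R(X,K_X+A+B)$ as the sections of an adjoint ring cut out by an $S_1$-adic ideal. The kernel of restriction to $S_1$ is controlled recursively by the lower-order layers (sections of $\ring X.(mD-kS_1)$), which themselves retain the same adjoint structure shifted by $-kS_1$; Theorem~\ref{t_lifting} supplies the reverse direction, lifting each generator of the restricted rings above back to $X$. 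Finitely many such lifts suffice because of the polytope finiteness in $\mathcal B_A^{S_1}(V)$. \emph{The main technical obstacle} is uniform verification of the coefficient hypothesis of Theorem~\ref{t_lifting}, namely $C\le B_{|S_1}-B_{|S_1}\wedge\frac{1}{qm}\fix|qm(\dots)|_{S_1}$, across the iteration: the effective boundary on $S_1$ produced by the procedure need not a priori have rational coefficients with denominators compatible with the integer $q$ appearing in the lifting theorem. This is precisely where Diophantine approximation (Proposition~\ref{l_polytope}) becomes essential -- one approximates the limiting boundary by rationals with denominators compatible with $q$, and Theorem~\ref{t_rational_polytope} guarantees these approximations remain inside the locus where the lifting hypothesis is satisfied. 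Combining the lifted generators with those of the restricted ring yields finitely many generators of $R(X,K_X+A+B)$, completing the proof.
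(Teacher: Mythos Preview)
Your proposal has two genuine gaps.

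First, the claim that if $\Bs(K_X+A+B)$ has no divisorial component then ``a basepoint-free argument shows $K_X+A+B$ is semiample'' is not justified. Zariski's theorem (Theorem~\ref{t_baselocus}) requires the base locus to be a \emph{finite} set; in dimension $\geq 3$ the stable base locus of $D$ can be, say, $S_1\cap S_2$, which is neither finite nor obviously removable. There is no general result asserting semiampleness for divisors with stable base locus of codimension $\geq 2$, so this case split does not dispose of anything.

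Second, and more seriously, your recursion only ever restricts to $S_1$, and this cannot terminate in a controlled way. After you peel off $\sigma_1$ once, you are looking at sections of $mD-S_1$; iterating, at $mD-kS_1$. For the restriction to $S_1$ to remain an \emph{adjoint} restriction governed by Theorem~\ref{t_consequences}, the divisor $mD-kS_1$ must be $\mathbb{Q}$-linearly equivalent to $m'(K_X+S_1+A+c\,S_2)$ with $c\in[0,1)$. That constraint fails once $k$ is large relative to $m$: you drift out of the ``$S_1$-adjoint'' region and the lifting theorem no longer applies. Your sketch of ``retain the same adjoint structure shifted by $-kS_1$'' hides exactly this problem, and the Diophantine approximation you invoke does not rescue it because it controls the \emph{boundary} coefficient in $V=\mathbb{R}S_2$, not the drifting $S_1$-coefficient.

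The paper handles this by working from the outset with a \emph{two}-dimensional cone $\mathcal{C}\subseteq\mathbb{R}S_1+\mathbb{R}S_2$, split as $\mathcal{C}=\mathcal{C}_1\cup\mathcal{C}_2$ where $\mathcal{C}_i$ is the sub-cone on which the divisors are $\mathbb{Q}$-equivalent to multiples of $K_X+S_i+A+(\text{boundary in }[0,1))$. Theorem~\ref{t_consequences} is applied to \emph{both} $\rest_{S_1}R(X,\mathcal{C}_1)$ and $\rest_{S_2}R(X,\mathcal{C}_2)$. One then shows that for $G\in\mathcal{C}_i$ far from the origin, $G-S_i\in\mathcal{C}$; the resulting ``zig-zag'' (restrict to $S_1$, subtract $S_1$, possibly land in $\mathcal{C}_2$, restrict to $S_2$, subtract $S_2$, \dots) decreases $\deg G$ and terminates in a bounded region handled by finitely many extra generators. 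No case distinction on the stable base locus is needed, no reduction to $b_1=0$ is needed, and the lifting theorem and Diophantine approximation are not invoked directly---they are already packaged into Theorem~\ref{t_consequences}. The passage to the higher-rank ring $R(X,\mathcal{C})$ is exactly the move your one-variable approach is missing; the paper remarks explicitly that ``it is necessary to work with higher rank algebras even in this simple situation.''
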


It will become clear from the scheme of the proof that it is necessary to work with higher rank algebras even in this simple situation. The proof will mostly be "by picture", and for that reason we restrict ourselves to the case of two components. The proof in the general case follows the same line of thought, the only difference is that it is more difficult to visualise. 

The following result will be used often without explicit mention in this section; the proof can be found in \cite{ADHL10}.

\begin{lemma}
Let $\mathcal{S}\subseteq\mathbb Z^r$ be a finitely generated monoid and let $R=\bigoplus_{s\in\mathcal S}R_s$ be an $\mathcal S$-graded algebra.
Let $\mathcal S'\subseteq\mathcal S$ be a finitely generated submonoid and let $R'=\bigoplus_{s\in\mathcal S'}R_s$.
\begin{enumerate}
\item If $R$ is finitely generated over $R_0$, then $R'$ is finitely generated over $R_0$.

\item If $R_0$ is Noetherian, $R'$ is a Veronese subring of finite index of $R$, and
$R'$ is finitely generated over $R_0$, then $R$ is finitely generated over $R_0$.
\end{enumerate}
\end{lemma}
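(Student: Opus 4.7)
The two parts of the lemma call for very different strategies: (1) reduces to a combinatorial fact about preimages of submonoids, while (2) is an algebraic transfer of finite generation via a Veronese.

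For (1), I would pick homogeneous $R_0$-algebra generators $x_1,\dots,x_n$ of $R$ with degrees $d_1,\dots,d_n\in\mathcal{S}$, together with monoid generators $t_1,\dots,t_m$ of $\mathcal{S}'$. Since $R'$ is spanned over $R_0$ by those monomials $x^a=\prod x_i^{a_i}$ whose multi-degree $\phi(a):=\sum a_i d_i$ lies in $\mathcal{S}'$, it suffices to show that the preimage monoid
\[
M \;:=\; \Bigl\{\,a\in\mathbb{N}^n \;\Big|\; \textstyle\sum a_i d_i\in\mathcal{S}'\,\Bigr\}
\]
is finitely generated. I would realise $M$ as the projection onto the first $n$ coordinates of
\[
\widetilde{M} \;:=\; \Bigl\{\,(a,b)\in\mathbb{N}^{n+m} \;\Big|\; \textstyle\sum a_i d_i=\sum b_j t_j\,\Bigr\},
\]
a submonoid of the positive orthant cut out by finitely many linear equations, hence finitely generated by Gordan's lemma; projections of finitely generated monoids are again finitely generated, so $M$ is too. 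The monomials $x^{m_\ell}$ associated to a finite generating set $\{m_\ell\}$ of $M$ then give finite $R_0$-algebra generators of $R'$.

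For (2), the plan is to show that $R$ is a finitely generated \emph{module} over $R'$; combined with finite generation of $R'$ over $R_0$ this immediately gives $R$ finitely generated over $R_0$. The Veronese-of-finite-index hypothesis supplies a positive integer $d$ with $d\mathcal{S}\subseteq\mathcal{S}'$. A division-with-remainder argument on the exponents of a fixed set of monoid generators of $\mathcal{S}$ produces a finite set $s_1,\dots,s_N\in\mathcal{S}$ such that $\mathcal{S}=\bigcup_i(s_i+\mathcal{S}')$. Moreover every homogeneous $f\in R_s$ satisfies $f^d\in R_{ds}\subseteq R'$, exhibiting $R$ as integral over $R'$, and Noetherianity of $R_0$ together with finite generation of $R'$ over $R_0$ makes $R'$ Noetherian with finitely generated graded $R_0$-modules $R_s$ for $s\in\mathcal{S}'$.

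The main obstacle is to promote these ingredients to a finite $R'$-module generating set for $R$. Setting $M_i:=\bigoplus_{s'\in\mathcal{S}'}R_{s_i+s'}$, so that $R=\sum_i M_i$, it suffices to show each $M_i$ is a finitely generated $R'$-module. The subtlety is that $R_{s_i+s'}$ may be strictly larger than $R_{s_i}\cdot R_{s'}$, so lifting $R_0$-module generators from the individual pieces $R_{s_i}$ is insufficient. The argument I would use embeds $R$ into its graded ring of fractions (in the intended applications $R$ is a domain of sections), observes that this fraction ring is an extension of the graded fraction ring of $R'$ of degree bounded in terms of $d$ because of the relation $f^d\in R'$, and concludes that $R$ lies in the integral closure of $R'$ inside this extension. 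Standard commutative algebra of integral closures of Noetherian algebras of finite type then gives that this integral closure, and hence $R$, is a finite $R'$-module, completing the argument.
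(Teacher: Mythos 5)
Part (1) of your proposal is correct and complete: choosing homogeneous algebra generators, observing that $R_s$ is spanned over $R_0$ by the monomials of degree $s$, realising the exponent monoid $M=\{a\in\mathbb N^n\mid \sum a_id_i\in\mathcal S'\}$ as the projection of the lattice points of the rational cone $\widetilde M$, and invoking Gordan's lemma is the standard argument. Note that the paper itself does not prove this lemma at all but refers to [ADHL10], so there is no in-text proof to compare with; your (1) matches the usual one.

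Part (2), as written, has two genuine gaps. First, your argument passes to the (graded) fraction field of $R$ and to integral closures inside a field extension, i.e.\ it silently assumes $R$ is a domain; this is not among the stated hypotheses, and it is not cosmetic: without some integrality assumption the statement itself is false. For instance, take $R=k[x]\oplus\bigoplus_{n\ \mathrm{odd}}k\,e_n$ with $\deg x=2$, $\deg e_n=n$ and $x\,e_n=e_ne_m=0$; then $R_0=k$ is Noetherian, the even Veronese subring is $k[x]$, hence finitely generated, but $R$ is not finitely generated. (In the paper's applications the rings are divisorial or restricted rings on integral varieties, hence domains, and the cited source works in that setting; so the hypothesis must be made explicit, not left as a parenthesis.) Second, even granting that $R$ is a domain, your final step appeals to finiteness of the integral closure of $R'$ in a finite extension of $\operatorname{Frac}(R')$; this does not follow from ``$R_0$ Noetherian'' alone (it requires $R_0$ to be, say, a field or a Nagata/excellent ring -- there are Noetherian domains with non-module-finite normalisation). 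A smaller point: the finiteness of $\operatorname{Frac}(R)$ over $\operatorname{Frac}(R')$ is not immediate from ``every homogeneous $f$ satisfies $f^d\in R'$''; you need the coset trick $g=(gf_c^{\,d-1})/f_c^{\,d-1}$ to exhibit finitely many field generators.

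Both gaps disappear if you argue coset by coset instead of via integral closure. Write $R=\bigoplus_c M_c$, where $c$ runs over the finitely many cosets of the finite-index subgroup and $M_c=\bigoplus_{s\in\mathcal S\cap c}R_s$; for each $c$ with $M_c\neq 0$ pick a nonzero homogeneous $f_c\in M_c$. If $d$ is the exponent of the quotient group, then for homogeneous $g\in M_c$ the element $gf_c^{\,d-1}$ has degree in $\mathcal S'$, so multiplication by $f_c^{\,d-1}$ maps $M_c$ into $R'$, injectively because $R$ is a domain. Since $R'$ is Noetherian ($R_0$ Noetherian plus finite generation), each $M_c$ is a finitely generated $R'$-module, hence $R$ is a finite $R'$-module and therefore a finitely generated $R_0$-algebra. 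This is essentially the argument behind the reference the paper cites, it uses exactly the stated hypotheses (plus integrality of $R$), and it replaces your reduction via integral closures, which is the weak link in your write-up.
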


\begin{proof}[Sketch of the proof of Theorem \ref{t_fingen}]
Let $V=\mathbb R S_1+\mathbb R S_2\subseteq\Div_{\mathbb{R}}(X)$ be the subspace spanned by $S_1$ and $S_2$,
let $\mathcal B\subseteq V$ be the rectangle with vertices $D,D+(1-b_1)S_1,D+(1-b_2)S_2$ and $D+(1-b_1)S_1+(1-b_2)S_2$,
and denote $\mathcal C=\mathbb R_+\mathcal B$.

For $i\in\{1,2\}$, consider the segments $$\mathcal B_i=[D+(1-b_i)S_i,D+(1-b_1)S_1+(1-b_2)S_2]\subseteq V$$ and the cones
$\mathcal C_i=\mathbb R_+\mathcal B_i$.
It is clear from the picture (see Figure \ref{pic1}) that $\mathcal C=\mathcal C_1\cup\mathcal C_2$, and that there exists $M>0$ such that the ``width'' of the cones $\mathcal C_i$
in the half-plane $\{xS_1+yS_2\mid x+y\geq M\}$ is bigger than $1$. More precisely,
\begin{enumerate}
\item if $xS_1+yS_2\in \mathcal{C}_i$ for $i\in\{1,2\}$ and for some $x,y\in\mathbb N$ with $x+y\ge M$, then $xS_1+yS_2-S_i\in \mathcal{C}$.
\end{enumerate}
\begin{figure}[htb]
\begin{center}
\includegraphics[width=0.62\textwidth]{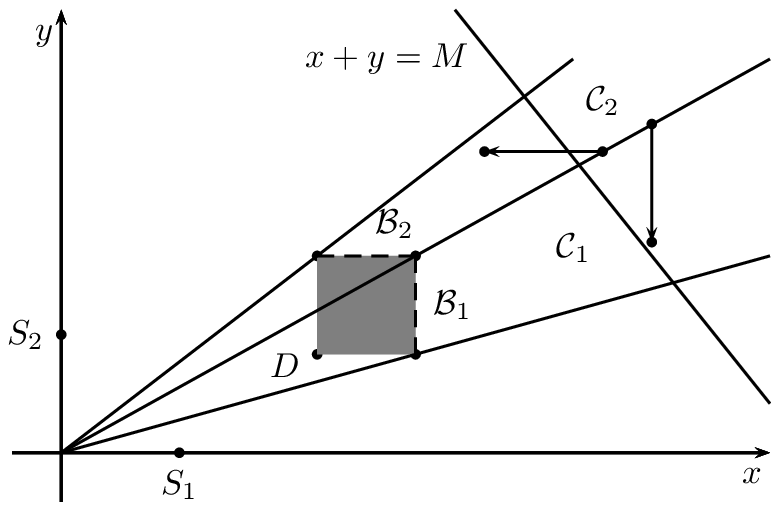}
\end{center}
\caption{}\label{pic1}
\end{figure}
We further claim that:
\begin{enumerate}
\item[(2)] for $i\in\{1,2\}$, the ring $\rest_{S_i} R(X,\mathcal C_i)$ is finitely generated.
\end{enumerate}
To show (2), without loss of generality we assume that  $i=1$. Let $\{D_1,\dots,D_\ell\}$ be a set of generators of $\mathcal C_1\cap\Div(X)$. Then
for every $j=1,\dots,\ell$, the line through $0$ and $D_j$ intersects the segment $\mathcal B_1$, and therefore, there exist rational numbers $0\le t_j\le 1-b_2$ and $k_j>0$ such that
$$D_j=k_j(D+ (1-b_1)S_1 + t_j S_2).$$
Since there is the natural projection
$$\rest_{S_1} R(X;D_1,\dots,D_\ell)\longrightarrow \rest_{S_1} R(X,\mathcal C_1),$$
it suffices to show that the first ring is finitely generated, and hence that the ring
$$R_1=\rest_{S_1} R(X;D+(1-b_1)S_1+t_1S_2,\dots,D+(1-b_1)S_1+t_\ell S_2)$$
is finitely generated. But this follows from Theorem \ref{t_consequences}, as
$$D+(1-b_1)S_1+t_jS_2\sim_\mathbb{Q} K_X+A+ S_1+(b_2+t_j) S_2.$$

Note that, in order to prove the theorem, it is enough to show that $R(X,\mathcal C)$ is finitely generated. Let $\sigma_i\in H^0(X,\ring X. (S_i))$  be
sections such that $\ddiv\sigma_i=S_i$, and let $\mathfrak R\subseteq R(X;S_1,S_2)$ be the ring spanned by $R(X,\mathcal C)$, $\sigma_1$ and $\sigma_2$.
Then it suffices to show that $\mathfrak R$ is finitely generated.

By (2), for $i\in\{1,2\}$ there are finite sets $\mathcal H_i$ of sections in the rings $R(X,\mathcal C_i)$ such that $\rest_{S_i} R(X,\mathcal C_i)$ are generated by the sets $\{\eta_{|S_i}\mid\eta\in\mathcal H_i\}$,
and denote $\mathcal H=\{\sigma_1,\sigma_2\}\cup\mathcal H_1\cup\mathcal H_2$. Let $\mathcal M$ be the intersection of the cone $\mathcal C$ with the half-plane $\{xS_1+yS_2\mid x+y\leq M\}$. Possibly by enlarging $\mathcal H$, we may assume that the elements of $\mathcal H$ generate $H^0(X,G)$ for every integral $G\in\mathcal M$.
\begin{figure}[htb]
\begin{center}
\includegraphics[width=0.62\textwidth]{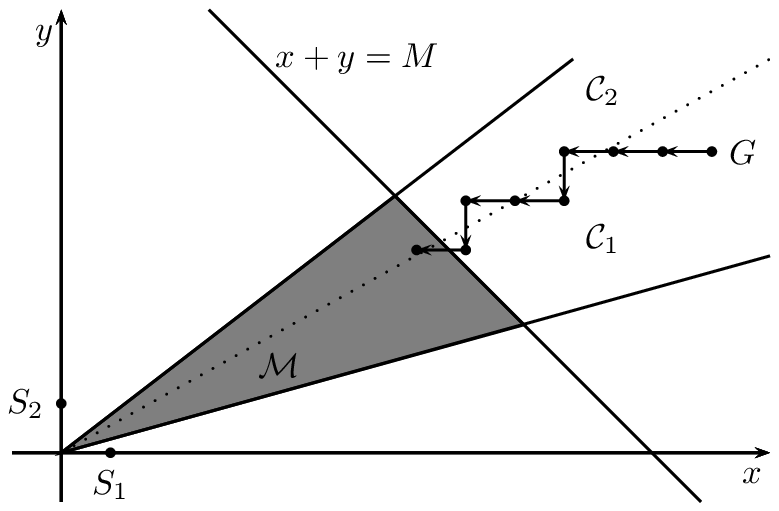}
\end{center}
\caption{}\label{pic2}
\end{figure}

We claim that $\mathcal H$ generates the whole ring $\mathfrak R$. Indeed, let $\chi \in \mathfrak R$. By definition of
$\mathfrak R$, we may write
$\chi=\sum_i\sigma_1^{\lambda_i}\sigma_2^{\mu_i}\chi_i$, where
$\chi_i\in H^0(X,\ring X. (G_i))$ for some integral $G_i\in \mathcal
C$ and some $\lambda_i,\mu_i\in \mathbb N$. Thus, it is enough to show that
$\chi_i$ are generated by the elements in $\mathcal H$, and therefore we may assume from the start that $\chi\in H^0(X,G)$ for some integral $G\in\mathcal C$. If $\chi\in\mathcal M$, we conclude by the definition of $\mathcal H$. Otherwise, assume that $G\in\mathcal C_1$, the case $G\in\mathcal C_2$ being analogous.
Then there are $\theta_1,\dots,\theta_z\in\mathcal H$ and a polynomial
$\varphi\in\C[X_1,\dots,X_z]$ such that $\chi_{|S_1}=\varphi(\theta_{1|S_1},\dots,\theta_{z|S_1})$, so
the exact sequence
$$0\longrightarrow H^0(X,\ring X.(G-S_1))\stackrel{\cdot \sigma_1}{\longrightarrow} H^0(X,\ring X.(G))\longrightarrow
H^0(S_1,\ring {S_1}.(G))$$
gives
$$\chi-\varphi(\theta_1,\dots,\theta_z)=\sigma_1\cdot\chi'$$
for some $\chi'\in H^0(X,\ring X. (G-S_1))$. Note that
$G-S_1\in\mathcal C$ by (1) above, and we continue with $\chi'$ instead of $\chi$.
This ``zig-zag'' process terminates after finitely many steps, once we reach $\mathcal M$ (see Figure \ref{pic2}).
We are done.
\end{proof}

\begin{remark}
In the case of surfaces, the ring $R(X,\mathcal C)$ which appears in the proof above, can be seen as the ring associated to the positive parts $P(D)$, for any divisor  $D\in\mathcal C$. By \cite{ELMNP}, the finite generation of this ring implies that the function $P(D)$ is piecewise linear on $\mathcal C$ and $P(D)$ is semiample for any  $\mathbb Q$-divisor $D\in \mathcal C$. This implies finiteness of ample models on   the rational polytope $\mathcal B$, see \cite{BCHM10}.  
\end{remark}

\section{Examples}

In this final section, we give examples which show that the results presented above are optimal.

\begin{example}\label{e_cutkosky}
This example is similar to \cite[Example 2.3.3]{Lazarsfeld04a}.  Here we show that in Theorem \ref{t_cox}, even for curves, the assumption of ampleness for the divisor $A$  cannot be replaced by nefness.

Let $X$ be an elliptic curve and let $A$ be a non-torsion integral divisor on $X$ of degree $0$. Let $B_1=0$ and $B_2\ge 0$ be a non-zero $\mathbb Q$-divisor such that $\rfdown B_2.=0$. Note that $A$ is nef and $(X,B_i)$ is canonical, for $i\in\{1,2\}$. We want to show that the ring $R=R(X;K_X+A+B_1,K_X+A+B_2)$ is not finitely generated.

To that end, let $k$ be a positive integer such that $kB_2$ is integral. We have that
$$R=\bigoplus_{(m_1,m_2)\in \mathbb N^2}R_{m_1,m_2},$$
where $R_{m_1,m_2}=H^0(X,\ring X. (\rfdown(m_1+m_2)A+m_2B_2.))$. If $R$ were finitely generated, then the set $\mathcal S=\mathbb R_+\{(m_1,m_2)\in\mathbb N^2\mid R_{m_1,m_2}\neq0\}$ would be a rational polyhedral cone, and in particular a closed subset of $\mathbb R^2$. However, we have $R_{m_1,0}=0$ for all $m_1>0$, and $R_{m_1,k}\neq0$ for every $k>0$ by Riemann-Roch, since $(m_1+k)A+kB_2$ has positive degree. Therefore $\mathcal S=\mathbb R_+^2\backslash\{(r,0)\mid r>0\}$, hence $R$ is not finitely generated.
\end{example}

\begin{example}\label{e_canonical}
In this example, we show that in Theorem \ref{t_lifting}, the assumption that $(S,C)$ is canonical cannot be replaced by the weaker
assumption that only $\rfdown C.=0$. Below, we are allowed to take $C=B$. The construction is similar to Mukai's flop, see \cite{Totaro09} and \cite[1.36]{Debarre01}.

Let $\mathcal E=\ring \mathbb P^1.\oplus \ring \mathbb P^1. (1)^{\oplus 3}$ and let $X=\mathbb P(\mathcal E)$ with the projection map $\pi\colon\map \mathbb P(\mathcal E).\mathbb P^1.$. Thus, $X$ is a smooth projective $4$-fold. Let $S\simeq\mathbb P^3$ be a fibre of $\pi$ and denote $\xi=c_1(\ring X.(1))$. Then $\xi$ is basepoint free by \cite[Lemma 2.3.2]{Lazarsfeld04a}, and
$$K_X=\pi^*(K_{\mathbb P^1}+\det\mathcal E)-4\xi=S-4\xi.$$
The linear system $|\xi -S|$ contains smooth divisors $S_1,S_2,S_3$
corresponding to the quotients $\mathcal E \longrightarrow \ring \mathbb P^1.\oplus \ring \mathbb P^1. (1)^{\oplus 2}$, and it is obvious that $S+S_1+S_2+S_3$ has simple normal crossings. If we denote
$$B= \frac 8 9 (S_1+S_2+S_3)  \qquad \text{and}\qquad A= \frac 7 3 \xi + \frac 1 6 S,$$
then $A$ is ample and $B\sim_{\mathbb Q}\frac 8 3( \xi -S)$. Note that $\rfdown B.=0$, and it is easy to check that $(S,B_{|S})$ is not canonical. Setting $\Delta=S+A+B$, we have
$$K_X+\Delta\sim_{\mathbb Q}\xi -\frac 1 2 S.$$
Let $P$ be the curve in $X$ corresponding to the trivial quotient of $\mathcal E$. Then $P=S_1\cap S_2\cap S_3$, and we have the relations $S\cdot P=1$ and $S\cdot \xi=0$. In particular, for any sufficiently divisible positive integer $m$, we have $P=\bs|m(\xi-S)|$ and
$$P\subseteq\bs|m(2\xi-S)|\subseteq\bs|m\xi|\cup\bs|m(\xi-S)|=P.$$
Therefore, the base locus of the linear system $|2m(K_X+\Delta)|$ is the curve $P$, and hence
$$\fix|2m(K_X+\Delta)|=0.$$
We want to show that for $m$ sufficiently divisible, the restriction map
$$H^0(X,\ring X. (2m(K_X+\Delta)))\to H^0(S,\ring S. (2m(K_S+(A+B)_{|S})))$$
is not surjective, thus demonstrating that the assumption of canonicity cannot be removed from Theorem \ref{t_lifting}. Assume the contrary. Then the sequence
\begin{align*}
0\longrightarrow H^0(X,\ring X. (2m(K_X+\Delta)-S))&\longrightarrow H^0(X,\ring X. (2m(K_X+\Delta)))\\
&\longrightarrow H^0(S,\ring S. (2m(K_S+(A+B)_{|S})))\longrightarrow 0
\end{align*}
is exact. After some calculations, we have
$$h^0(X,\ring X. (2m(K_X+\Delta)-S))=h^0(\mathbb P^1,S^{2m}\mathcal E(- m -1))=\sum_{j=m+1}^{2m}{j+2\choose2}(j-m)$$
and
$$h^0(X,\ring X. (2m(K_X+\Delta)))=h^0(\mathbb P^1,S^{2m}\mathcal E(- m))=\sum_{j=m}^{2m}{j+2\choose2}(j-m+1).$$
Furthermore, one sees that $K_S+(A+B)_{|S}$ represents a hyperplane in $S$, and thus
$$h^0(S,\ring S. (m(K_S+(A+B)_{|S})))=h^0(\mathbb P^3,\ring \mathbb P^3. (m))={m+3\choose m}.$$
It is now straightforward to derive a contradiction.
\end{example}

\bibliographystyle{amsalpha}
\bibliography{Library}

\newcommand{\etalchar}[1]{$^{#1}$}
\providecommand{\bysame}{\leavevmode\hbox to3em{\hrulefill}\thinspace}
\providecommand{\MR}{\relax\ifhmode\unskip\space\fi MR }
\providecommand{\MRhref}[2]{%
  \href{http://www.ams.org/mathscinet-getitem?mr=#1}{#2}
}
\providecommand{\href}[2]{#2}
\begin{thebibliography}{BCHM10}

\bibitem[ADHL10]{ADHL10}
I.~Arzhantsev, U.~Derenthal, J.~Hausen, and A.~Laface, \emph{Cox {R}ings},
  arXiv:1003.4229v1\setbox0=\hbox{2010}.

\bibitem[BCHM10]{BCHM10}
C.~Birkar, P.~Cascini, C.~Hacon, and J.~M\textsuperscript{c}Kernan,
  \emph{Existence of minimal models for varieties of log general type}, J.
  Amer. Math. Soc. \textbf{23} (2010), no.~2, 405--468.

\bibitem[Bou04]{Boucksom04}
S.~Boucksom, \emph{Divisorial {Z}ariski decompositions on compact complex
  manifolds}, Ann. Sci. \'Ecole Norm. Sup. (4) \textbf{37} (2004), no.~1,
  45--76.

\bibitem[CKL11]{CKL11}
A.~Corti, A.-S. Kaloghiros, and V.~Lazi\'c, \emph{Introduction to the {M}inimal
  {M}odel {P}rogram and the existence of flips}, Bull. Lond. Math. Soc
  \textbf{43} (2011), no.~3, 415--448.

\bibitem[CL10a]{CL10a}
P.~Cascini and V.~Lazi\'c, \emph{New outlook on the {M}inimal {M}odel
  {P}rogram, {I}}, arXiv:1009.3188, to appear in Duke Math.
  J\setbox0=\hbox{2010}.

\bibitem[CL10b]{CL10}
A.~Corti and V.~Lazi\'c, \emph{New outlook on the {M}inimal {M}odel {P}rogram,
  {II}}, arXiv:1005.0614\setbox0=\hbox{2010}.

\bibitem[Cor11]{Corti11}
A.~Corti, \emph{Finite generation of adjoint rings after {L}azi\'c: an
  introduction}, Classification of Algebraic Varieties (C.~Faber, G.~van~der
  Geer, and E.~Looijenga, eds.), EMS Series of Congress Reports, EMS Publishing
  House, 2011, pp.~197--220.

\bibitem[Deb01]{Debarre01}
O.~Debarre, \emph{Higher-{D}imensional {A}lgebraic {G}eometry}, Universitext,
  Springer-Verlag, New York, 2001.

\bibitem[ELM{\etalchar{+}}06]{ELMNP}
L.~Ein, R.~Lazarsfeld, M.~Musta{\c{t}}\u{a}, M.~Nakamaye, and M.~Popa,
  \emph{Asymptotic invariants of base loci}, Ann. Inst. Fourier (Grenoble)
  \textbf{56} (2006), no.~6, 1701--1734.

\bibitem[EP10]{EP09}
L.~Ein and M.~Popa, \emph{{Extension of sections via adjoint ideals}},
  arXiv:0811.4290\setbox0=\hbox{2010}.

\bibitem[FM00]{FM00}
O.~Fujino and S.~Mori, \emph{A canonical bundle formula}, J. Differential Geom.
  \textbf{56} (2000), no.~1, 167--188.

\bibitem[Hac05]{H05}
C.~Hacon, \emph{Extension theorems and their applications}, Seattle 2005
  lecture notes\setbox0=\hbox{2005}.

\bibitem[HM10]{HM10}
C.~Hacon and J.~M\textsuperscript{c}Kernan, \emph{Existence of minimal models
  for varieties of log general type {II}}, J. Amer. Math. Soc. \textbf{23}
  (2010), no.~2, 469--490.

\bibitem[Kaw85]{Kawamata87}
Y.~Kawamata, \emph{The {Zariski} decomposition of log-canonical divisors},
  Algebraic Geometry Bowdoin, Proc. Symp. Pure Math. 46 (1987), 1985,
  pp.~425--433.

\bibitem[Kaw99]{Kawamata99}
\bysame, \emph{On the extension problem of pluricanonical forms}, Algebraic
  geometry: Hirzebruch 70 (Warsaw, 1998), Contemp. Math., vol. 241, Amer. Math.
  Soc., Providence, RI, 1999, pp.~193--207.

\bibitem[KM98]{km98}
J.~Koll{\'a}r and S.~Mori, \emph{Birational {G}eometry of {A}lgebraic
  {V}arieties}, Cambridge {T}racts in {M}athematics, vol. 134, Cambridge
  University Press, 1998.

\bibitem[Laz04]{Lazarsfeld04a}
R.~Lazarsfeld, \emph{Positivity in {A}lgebraic {G}eometry. {I}}, Ergebnisse der
  Mathematik und ihrer Grenzgebiete, vol.~48, Springer-Verlag, Berlin, 2004.

\bibitem[Laz09]{Laz09}
V.~Lazi{\'c}, \emph{Adjoint rings are finitely generated},
  arXiv:0905.2707v3\setbox0=\hbox{2009}.

\bibitem[Nak04]{Nakayama04}
N.~Nakayama, \emph{Zariski-decomposition and abundance}, MSJ Memoirs, vol.~14,
  Mathematical Society of Japan, Tokyo, 2004.

\bibitem[P{\u{a}}u07]{Paun05}
M.~P{\u{a}}un, \emph{Siu's invariance of plurigenera: a one-tower proof}, J.
  Differential Geom. \textbf{76} (2007), no.~3, 485--493.

\bibitem[Sho03]{Shokurov03}
V.~V. Shokurov, \emph{Prelimiting flips}, Proc. Steklov Inst. of Math.
  \textbf{240} (2003), 82--219.

\bibitem[Siu98]{Siu98}
Y.-T. Siu, \emph{Invariance of plurigenera}, Invent. Math. \textbf{134} (1998),
  no.~3, 661--673.

\bibitem[Siu02]{Siu02}
\bysame, \emph{Extension of twisted pluricanonical sections with
  plurisubharmonic weight and invariance of semipositively twisted plurigenera
  for manifolds not necessarily of general type}, Complex geometry
  (G{\"o}ttingen, 2000), Springer, Berlin, 2002, pp.~223--277.

\bibitem[Siu06]{Siu06}
\bysame, \emph{A {G}eneral {N}on-{V}anishing {T}heorem and an {A}nalytic
  {P}roof of the {F}inite {G}eneration of the {C}anonical {R}ing},
  arXiv:math.AG/0610740v1\setbox0=\hbox{2006}.

\bibitem[Tak06]{Takayama06}
S.~Takayama, \emph{Pluricanonical systems on algebraic varieties of general
  type}, Invent. Math. \textbf{165} (2006), no.~3, 551--587.

\bibitem[Tot09]{Totaro09}
B.~Totaro, \emph{Jumping of the nef cone for {F}ano varieties},
  arXiv:0907.3617v2\setbox0=\hbox{2009}.

\bibitem[Zar62]{Zar62}
O.~Zariski, \emph{The theorem of {R}iemann-{R}och for high multiples of an
  effective divisor on an algebraic surface}, Ann. of Math. (2) \textbf{76}
  (1962), 560--615.

\end{thebibliography}

\end{document}